\newtheorem{theorem}{Theorem}
\newtheorem{lemma}{Lemma}
\newtheorem{corollary}{Corollary}
\newtheorem{proposition}{Proposition}
\newtheorem{remark}{Remark}
\newenvironment{proof}[1]{\par\noindent\underline{Proof #1}:\quad}%
{\unskip\nobreak\hfil\penalty50\hskip2em\null\nobreak\hfil%
$\Box$\parfillskip0pt\par\medskip}
\title{ Maximal Eigenvalue and norm of the  product of Toeplitz matrices. Study of a particular case.}
\author{ Philippe Rambour\thanks{Universit\'{e} de Paris Sud,
      B\^atiment 425; F-91405
Orsay Cedex;
tel : 01 69 15 57 28 ; fax 01 69 15 60 19
      \mbox{e-mail : philippe.rambour@math.u-psud.fr}
     }}
\date{}
\begin{document}
\maketitle
  \renewcommand{\abstractname}{Abstract}
     \begin{abstract}
     \textbf{Maximal eigenvalue and norm of the  product of Toeplitz matrices. Study of a particular case}\\
     In this paper we describe the asymptotic behaviour of the spectral norm of the product of two finite Toeplitz matrices as the matrix dimension goes to infinity. 
     These Toeplitz matrices are generated by positive functions  with Fisher-Hartwig singularities of negative order.
     Since we have positive operators it is known that the spectral norm is also the largest eigenvalue of this product.

           \end{abstract}
           
%


\section{Introduction}
If $f\in L^1 (\mathbb T)$ the Toeplitz matrix with symbol $f$ denoted by $T_N(f)$ is the 
$ (N+1)\times (N+1)$ matrix such that
$$\left(T_N (f)\right)_{i+1,j+1} = \hat f(j-i)
\quad \forall i, j \quad 0\le i,j\le N $$ 
(see, for instance, \cite{Bo.2},\cite{Bo.3}).
We say that a function $h$ is regular if 
 $ h \in L^{\infty} (\mathbb T)$ and $h>0$. Otherwise the function $h$ is said singular.
 If $b$ is a regular function  continuous in $e^{i\theta_{r} }$ we call Fisher-Hartwig symbols the functions 
 $$ f(e^{i\theta}) = b(e^{i\theta}) \prod_{r=1}^R \vert e^{i\theta} -e^{i\theta_{r}} \vert ^{2\alpha_{r}}  
 \varphi _{\beta_{r},\theta_{r}}(e^{i \theta})$$
where 
\begin{itemize}
\item [$\bullet$]
the complex numbers $\alpha_{r}$ and $\beta_{r}$ 
 are subject to the constraints $-\frac {1}{2} < \alpha_{r} < \frac {1}{2}$ and 
 \mbox{$-\frac {1}{2} < \beta_{r} < \frac {1}{2}$,}
 \item [$\bullet$]
 the functions $\varphi _{\beta_{r},\theta_{r}}$ are defined as
 $\varphi _{\beta_{r},\theta_{r}}(e^{i \theta}) = e^{i \beta_{r} (\pi +\theta -\theta_{r})}$.
\end{itemize}
The problem of the extreme eigenvalues 
of a Toeplitz matrix is well known (see \cite{GS} and \cite{Avram1}). 
If $ \lambda_{k,N} \,1 \le k \le N+1$ are  the 
eigenvalues of $T_{N}(f)$ with 
$ \lambda_{1,N} \le \lambda_{2,N}\cdots \le 
\lambda_{N+1,N}$ 
 we have 
$$ \lim_{N \rightarrow= +\infty} \lambda_{1,N} = m_{f} \quad \mathrm{and} \quad
 \lim_{N \rightarrow+\infty} \lambda_{N+1,N} = 
M_{f}$$
 with 
$m_{f} = \mathrm {essinf}\,f$ and $M_{f} = \mathrm {essup} \,f$. 
In \cite{BoGr} and \cite{BoVi}
 B\"{o}ttcher and Grudsky on one hand and B\"{o}ttcher and Virtanen in the other hand give an asymptotic 
 estimation of the maximal eigenvalue in the case of one Toeplitz matrix when the symbol has one or several zeros of negative order. In 
 \cite {RS1111} we have obtained the asymptotic 
of the minimal eigenvalue of one Toeplitz matrix when the symbol has
one zero of order $\alpha$ with $\alpha> \frac{1}{2}$.

But estimatig the eigenvalues of the product of two 
Toeplitz matrices is more delicate. Effectively it is clear that a product of Toeplitz matrices is generally not a Toeplitz matrix. 
In the first part of this paper we consider the product $T_N (f_1) T_N(f_2)$ of two Toeplitz 
matrices  where 
$f_1 (e^{i\theta}) = \vert 1- e^{i \theta} \vert^{-2\alpha_1} c_1(e^{i\theta}),$ and 
$f_2 (e^{i\theta}) = \vert 1- e^{i \theta} \vert^{-2\alpha_2} c_2(e^{i\theta})$
with  $0 <\alpha_1,\alpha_2<\frac{1}{2}$ and 
$c_1,c_2 $  are two regular continuous functions on the torus. 
For these symbols we obtain the norm of the matrix $T_N (f_1) T_N(f_2)$. Owing to an important result of Widom (see Lemma \ref{WIDOM} and also \cite{W2}, \cite{W3}, \cite{Wid}, \cite{Bow2}), which connects the norm of an operator and the norm of a matrix. A proof of this result can be found in \cite{BoVi}. Since $T_N (f_1) T_N(f_2)$ is a positive matrix
the norm is also the maximal eigenvalue of this matrix. Hence our main result (see Theorem \ref{PREMIER}) can be also stated as 
\begin{theorem} \label{SOUSPREMIER}
 Let $f_{1}(e^{i \theta}) = \vert 1- e^{i \theta}  \vert ^{-2\alpha_{1}} c_{1} (e^{i \theta}) $ and 
 $f_{2} (e^{i \theta}) = \vert 1-e^{i \theta}  \vert ^{-2\alpha_{2}} c_{2}(e^{i \theta}) $ with $0<\alpha_{1},\alpha_{2}<\frac{1}{2}$ and 
  $c_{1},c_{2} \in L^\infty (\mathbb T)$ continuous and nonzero in 1. Then 
 if $\Lambda_{\alpha_1,\alpha_2,N} $ is the maximal eigenvalue 
 of $T_{N} (f_{1}) T_{N}( f_{2})$ we have 
 $$ \Lambda_{\alpha_1,\alpha_2,N}  =  N^{2\alpha_{1}+2\alpha_{2}} C_{\alpha_{1}}C_{\alpha_{2}} c_{1}(1) c_{2}(1)\Vert K_{\alpha_{1},\alpha_{2}}\Vert + o(N^{2\alpha_{1}+2\alpha_{2}} ).$$
 with 
 $$\forall \alpha\in ]0,\frac{1}{2}[\, C_{\alpha} = \frac{\Gamma (1-2\alpha)\sin(\pi \alpha)} {\pi}$$
   and $K_{\alpha_1,\alpha_2}$ the integral operator on $L^2[0,1]$ with kernel $(x,y)\rightarrow \int _0^1 \vert x-t\vert^{2\alpha_1-1}
  \vert y-t\vert^{2\alpha_2-1}dt $.
    \end{theorem}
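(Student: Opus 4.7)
I would prove the theorem by conjugating $T_N(f_1)T_N(f_2)$ with an isometric embedding $U_N^\ast:\cc^{N+1}\to L^2[0,1]$ (essentially the step-function extension), so that the rescaled operators
$$\frac{1}{N^{2\alpha_1+2\alpha_2}}\,U_N^\ast\, T_N(f_1)T_N(f_2)\,U_N$$
live on the fixed Hilbert space $L^2[0,1]$ and converge in operator norm to $C_{\alpha_1}C_{\alpha_2}c_1(1)c_2(1)\,K_{\alpha_1,\alpha_2}$. Since the conjugation preserves norms, this immediately yields
$\Vert T_N(f_1)T_N(f_2)\Vert = N^{2\alpha_1+2\alpha_2}C_{\alpha_1}C_{\alpha_2}c_1(1)c_2(1)\Vert K_{\alpha_1,\alpha_2}\Vert + o(N^{2\alpha_1+2\alpha_2})$, and positivity of $T_N(f_1)T_N(f_2)$ (already noted in the introduction) identifies this norm with $\Lambda_{\alpha_1,\alpha_2,N}$. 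Lemma \ref{WIDOM} provides the framework enabling the passage from convergence of rescaled kernels to convergence of norms.

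\textbf{Fourier asymptotic.} The first concrete step is the classical expansion
$$\hat f_j(k) = C_{\alpha_j}\,c_j(1)\,|k|^{2\alpha_j-1}\bigl(1+o(1)\bigr),\qquad |k|\to\infty,$$
obtained by combining the known asymptotic of the Fourier coefficients of $|1-e^{i\theta}|^{-2\alpha_j}$ (whose leading constant $\Gamma(1-2\alpha_j)/(\Gamma(\alpha_j)\Gamma(1-\alpha_j))$ simplifies, via the reflection formula $\Gamma(\alpha_j)\Gamma(1-\alpha_j)=\pi/\sin(\pi\alpha_j)$, to $C_{\alpha_j}$) with the continuity of $c_j$ at $1$. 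This asymptotic dictates the normalization $N^{2\alpha_1+2\alpha_2}$.

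\textbf{Limit kernel.} The $(j,k)$-entry of $T_N(f_1)T_N(f_2)$ is $\sum_{\ell=0}^N \hat f_1(j-\ell)\hat f_2(\ell-k)$. Setting $j=\lfloor Nx\rfloor$, $k=\lfloor Ny\rfloor$, $\ell=\lfloor Nt\rfloor$, substituting the Fourier asymptotic and recognising the Riemann sum in $\ell$ formally gives
$$(T_N(f_1)T_N(f_2))_{\lfloor Nx\rfloor,\lfloor Ny\rfloor}\;\sim\;C_{\alpha_1}C_{\alpha_2}c_1(1)c_2(1)\,N^{2\alpha_1+2\alpha_2-1}\int_0^1|x-t|^{2\alpha_1-1}|t-y|^{2\alpha_2-1}dt.$$
After the conjugation by $U_N$ (which introduces an additional factor $N$ from the volume element $dt\approx 1/N$), this becomes exactly $C_{\alpha_1}C_{\alpha_2}c_1(1)c_2(1)$ times the kernel of $K_{\alpha_1,\alpha_2}$. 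Since $2\alpha_j-1\in(-1,0)$ for $\alpha_j\in]0,\demi[$, the limit kernel is square-integrable on $(0,1)^2$, so $K_{\alpha_1,\alpha_2}$ is Hilbert-Schmidt and its norm is finite.

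\textbf{Main obstacle.} The delicate point is upgrading pointwise convergence of the rescaled kernels to convergence in operator norm on $L^2[0,1]$. Two obstructions intertwine: the Fourier asymptotic above is valid only for large $|k|$, while the summation index $\ell$ necessarily passes through $\ell\approx j$ and $\ell\approx k$; at the same time the limit kernel is itself singular on $\{t=x\}\cup\{t=y\}$, so the contribution of those regions cannot simply be discarded. The strategy I would follow is to split the $\ell$-sum into a bulk part, on which the Riemann-sum convergence is controlled by dominated convergence with the integrable majorant $|x-t|^{2\alpha_1-1}|t-y|^{2\alpha_2-1}\in L^1((0,1)^3)$, and a collection of boundary strips (indices $\ell$ near $0$, $N$, $j$ or $k$) whose contribution to the operator norm must be estimated via a Schur test or a Hilbert-Schmidt bound and shown to be $o(N^{2\alpha_1+2\alpha_2})$. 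The $o(1)$ errors coming from the continuity of $c_j$ at $1$ must be absorbed uniformly through these estimates. This two-layer splitting is the technical heart of the proof; it parallels B\"ottcher and Virtanen's single-matrix treatment of Fisher-Hartwig singularities, with the extra combinatorial bookkeeping forced by the product of two singular factors.
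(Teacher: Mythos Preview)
Your proposal is correct and follows essentially the same architecture as the paper: Widom's lemma to pass to integral operators on $L^2[0,1]$, the Fourier asymptotic $\hat f_j(k)\sim C_{\alpha_j}c_j(1)|k|^{2\alpha_j-1}$, identification of the limit kernel as a Riemann sum, and then the hard work of upgrading to operator-norm convergence.

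Two organisational differences are worth flagging. First, the paper strips off the regular factors \emph{before} any kernel analysis: writing $T_N(f_j)=c_j(1)T_{j,N}+D_{j,N}$ with $(T_{j,N})_{k,l}=C_{\alpha_j}|k-l|^{2\alpha_j-1}$ and invoking the B\"ottcher--Virtanen estimate $\|D_{j,N}\|=o(N^{2\alpha_j})$ reduces everything to the pure power case $c_1=c_2=1$, so the $o(1)$ from continuity of $c_j$ never has to be tracked through the Riemann-sum argument. Second, the paper's splitting for operator-norm convergence is primarily in the $(x,y)$ variable rather than in the summation variable: one fixes $\mu$ close to $1$ and treats $|x-y|>N^{\mu-1}$ and $|x-y|<N^{\mu-1}$ separately. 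On the far-from-diagonal piece the seven-part decomposition of the $u$-sum you sketch gives uniform convergence of kernels. On the near-diagonal piece the key device is a pointwise sandwich (Lemma~\ref{DUO}),
\[
|x-y|^{2\alpha_1+2\alpha_2-1}\le \int_0^1|x-t|^{2\alpha_1-1}|y-t|^{2\alpha_2-1}\,dt\le H_{\alpha_1\alpha_2}\,|x-y|^{2\alpha_1+2\alpha_2-1},
\]
which dominates both $k_N$ and $k_{\alpha_1,\alpha_2}$ near the diagonal by a single Fisher--Hartwig kernel whose truncated operator norm is $O(N^{(\mu-1)(2\alpha_1+2\alpha_2)})=o(1)$. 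Your ``dominated convergence in $L^1((0,1)^3)$'' does not by itself yield operator-norm convergence; the paper's route through Lemma~\ref{DUO} is what closes this gap.
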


  Then we obtain bounds on $\Vert K_{\alpha_1,\alpha_2}\Vert$ 
  which provides bounds on $\Lambda_{\alpha_1,\alpha_2,N}$ (see the theorem \ref{DEUX}).\\
In a second part we apply this result to obtain  the maximal eigenvalue $\Lambda_{\alpha,\beta,N} $
of the more general symbols
\begin{equation} \label{F1F2}
\tilde f_{1} (e^{i \theta}) = \vert 1- e^{i \theta}  \vert ^{-2\alpha} \prod_{j=1}^p \vert e^{i \theta_{j}} - e^{i \theta}  \vert ^{-2\alpha_{j}} c_{1}(e^{i \theta}) 
   \quad \mathrm{and} \quad
  \tilde f_{2} (e^{i \theta}) = \vert 1- e^{i \theta}\vert ^{-2\beta} \prod_{j=1}^q \vert e^{i \theta_{j}} -e^{i \theta}  \vert ^{-2\alpha_{j}} c_{2} (e^{i \theta}) 
  \end{equation}
  with $0<\alpha,\beta<\frac{1}{2} $, $\displaystyle{\alpha > \max_{1\le j \le p} (\alpha_{j})}$,
  $\displaystyle{\beta > \max_{1\le j \le q} (\beta_{j})}$ and where $c_{1},c_{2}$ are two regular functions 
  satisfying precise hypotheses. We obtain 
 $$\Lambda_{\alpha,\beta,N}\sim C N^{2\alpha+2\beta} \Vert K_{\alpha,\beta}\Vert $$
 (see Theorem \ref{TROISIEME} for the expression of $C$).
 \begin{remark}
 To get Theorem \ref{TROISIEME} we give in Lemma \ref{FOURIER} an asymptotic of the Fourier coefficients of the symbols 
 $\tilde f_{1}$ and $\tilde f_{2}$ of (\ref{F1F2}). We may observe that this lemma provides a statement that
 slightly  differs from Theorem 4.2. in \cite{BoVi}.  
 \end{remark}
 This statement will be
 \begin{theorem}
  Put 
  $\sigma = \prod _{j=1}^R \vert \chi-\chi_{j}\vert^{-2\alpha_{j}}  c$ where 
  $\forall j$, $\chi_{j}\in \mathbb T$  and 
  \begin{itemize}
  \item [i)] $0<\alpha_{1}<\frac{1}{2}$
  \item [ii)]
  $\displaystyle{\alpha_{1} > \max_{2\le j \le R} (\alpha_{j})}$ .
  \end{itemize}
  If $c$ is a regular positive function with 
  $c\in A(\mathbb T,r)$ for $1>r>0$ (see the point 2.2) we have 
  $$ \Lambda_{N} \sim  H \Vert K_{\alpha_{1}}\Vert N^{2\alpha_{1}} $$
  where $\Lambda_{N}$ is the maximal eigenvalue of $T_{N}(\sigma)$, 
  $ H = C_{\alpha_{1}} c(\chi_1) \prod_{j=2}^R \vert 1-\chi_{j}\vert {-2\alpha_{j}}$ and 
  $K_{\alpha_{1}}$ is the integral operator on 
  $L^2(0,1)$ with kernel $(x,y) \rightarrow \vert x-y\vert ^{-2\alpha_{1}-1}$.
 \end{theorem}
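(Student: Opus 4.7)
The plan is to reduce the statement to a single Fisher--Hartwig singularity problem and to follow the same strategy as in Theorem~\ref{SOUSPREMIER}, now specialised to one Toeplitz matrix. The key observation is that the hypothesis $\alpha_1>\alpha_j$ for $j\ge 2$ makes the singularity at $\chi_1$ dominate, so that only its contribution shows up at leading order. Concretely, I would isolate the dominant factor by writing
$$\sigma(\chi)=|\chi-\chi_1|^{-2\alpha_1}\,h(\chi),\qquad h(\chi)=c(\chi)\prod_{j=2}^{R}|\chi-\chi_j|^{-2\alpha_j},$$
so that $h$ is continuous and nonvanishing at $\chi_1$, with $h(\chi_1)=c(\chi_1)\prod_{j=2}^{R}|\chi_1-\chi_j|^{-2\alpha_j}$. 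Adopting the convention $\chi_1=1$ implicit in the statement of $H$, this equals $H/C_{\alpha_1}$. The remaining singularities of $h$ at $\chi_2,\dots,\chi_R$ are integrable and of strictly smaller order.

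Next I would invoke Lemma~\ref{FOURIER} applied to $\sigma$, which produces an asymptotic of the Fourier coefficients of the form
$$\widehat\sigma(k)=C_{\alpha_1}\,h(\chi_1)\,\chi_1^{-k}\,|k|^{2\alpha_1-1}+O\!\left(|k|^{2\alpha'-1}\right),$$
with $\alpha'=\max_{j\ge 2}\alpha_j<\alpha_1$. With this at hand, Widom's lemma (Lemma~\ref{WIDOM}) identifies $\Lambda_N=\|T_N(\sigma)\|$ with the norm of a rescaled integral operator on $L^2(0,1)$: after conjugation by the diagonal phase $k\mapsto\chi_1^{-k}$ and the change of variables $i=\lfloor Nx\rfloor$, $j=\lfloor Ny\rfloor$, the rescaled matrix $N^{-2\alpha_1}T_N(\sigma)$ converges in operator norm to the integral operator with kernel $H\,|x-y|^{2\alpha_1-1}$, i.e. to $H\,K_{\alpha_1}$. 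Since $T_N(\sigma)$ is a positive operator, $\Lambda_N=\|T_N(\sigma)\|$, and continuity of the norm on this convergence yields $N^{-2\alpha_1}\Lambda_N\to H\,\|K_{\alpha_1}\|$, which is the announced asymptotic.

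The main obstacle is the quantitative control of the non-dominant singularities. Although they enter the Fourier asymptotic only through the remainder $O(|k|^{2\alpha'-1})$, in the matrix itself they produce contributions clustered at indices $i-j$ of moderate size, and one must verify that after the $N^{-2\alpha_1}$ normalisation their operator norm genuinely tends to zero. I would handle this by splitting $\sigma=\sum_{j=1}^{R}\sigma_j+\sigma_{\mathrm{reg}}$, where each $\sigma_j$ carries only the singularity at $\chi_j$; the corresponding rescaled Toeplitz matrix has norm of order $N^{2\alpha_j}$ by the single-singularity version of the argument, and since $\alpha_j<\alpha_1$ for $j\ge 2$ the non-dominant terms contribute only $o(N^{2\alpha_1})$. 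The hypothesis $c\in A(\mathbb T,r)$ is exactly what ensures that $\sigma_{\mathrm{reg}}$ has rapidly decaying Fourier coefficients, so that $T_N(\sigma_{\mathrm{reg}})$ is uniformly bounded in operator norm and contributes only $O(1)$. Combining the three pieces at the level of operator norms, using that only the first part has a non-trivial limit at the scale $N^{2\alpha_1}$, will yield the stated equivalence.
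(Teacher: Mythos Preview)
Your first two paragraphs are exactly the paper's route: invoke Lemma~\ref{FOURIER} to obtain the uniform asymptotic $\widehat\sigma(M)=H\,|M|^{2\alpha_1-1}+o(|M|^{2\alpha_1-1})$, split $T_N(\sigma)=T_{1,N}+D_{1,N}$ where $T_{1,N}$ has entries $H\,|k-l|^{2\alpha_1-1}$ (zero on the diagonal), use the B\"ottcher--Virtanen fact that a Toeplitz matrix with entries $o(|k-l|^{2\alpha_1-1})$ has norm $o(N^{2\alpha_1})$, and then apply Widom's Lemma~\ref{WIDOM} (after the diagonal conjugation by $\chi_1^{-k}$) to identify $N^{-2\alpha_1}\|T_{1,N}\|$ with $\|K_{\alpha_1}\|$ in the limit. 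This is precisely how the paper proves Theorem~\ref{TROISIEME}, of which the present statement is the one-factor special case.

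Your third paragraph is where you diverge, and the divergence is unnecessary. The concern you raise --- that the subdominant singularities might not be negligible in operator norm after the $N^{-2\alpha_1}$ rescaling --- is already settled by the argument you outlined just above: Lemma~\ref{FOURIER} gives the remainder $o(|M|^{2\alpha_1-1})$ \emph{uniformly} in $M$, and that uniformity is exactly the hypothesis of the B\"ottcher--Virtanen norm bound $\|D_{1,N}\|=o(N^{2\alpha_1})$. No further decomposition is needed. Your proposed additive splitting $\sigma=\sum_j\sigma_j+\sigma_{\mathrm{reg}}$ is not literally available since $\sigma$ is a product; it can be made to work via a partition of unity localising near each $\chi_j$, and then each piece $\phi_j\sigma$ does have a single singularity with a bounded continuous cofactor, so the single-singularity bound applies and gives $\|T_N(\phi_j\sigma)\|=O(N^{2\alpha_j})=o(N^{2\alpha_1})$ for $j\ge 2$. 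That is a valid alternative route, but it is longer than the paper's and buys nothing extra here, since all the work of handling the interaction of the singularities is already packaged inside Lemma~\ref{FOURIER}.

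One minor remark: your remainder $O(|k|^{2\alpha'-1})$ is sharper than what Lemma~\ref{FOURIER} actually delivers (only $o(|k|^{2\alpha_1-1})$), but the weaker statement is all that is used. Also note that the kernel in the statement should read $|x-y|^{2\alpha_1-1}$, as you correctly write later; the exponent $-2\alpha_1-1$ in the theorem as stated is a typo (compare Proposition~\ref{encadrement}).
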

An important application 
of the knowledge of the maximal eigenvalue 
of the product of two Toeplitz matrices $T_N (f_1)$ and 
$T_N (f_2)$ is the application of the Gärtner-Ellis Theorem to obtain a large deviation principle and(\cite{DemZei}). Here we consider the  case of 
long memory (see also \cite{Sa.Ka.Ta}).
For the application of the Gärtner-Ellis Theorem in the case where $f_1$ and $f_2$ belong to $L^\infty (\mathbb T)$  \cite{B.G.R.1},
\cite{B.G.L.1},\cite{B.B.B.1} are good references. 
\begin{remark}
For the case where $f,g \in L^\infty (\mathbb T)$ is it not true in general that the maximal eigenvalue 
of $T_{N}(f) T_{N}(g)$ goes to $\mathrm{essup} (f g)$. Likewise it is not always true that 
 the minimal eigenvalue of  $T_{N}(f) T_{N}(g)$ goes to $\mathrm{essinf} (fg)$. If we denote
these maximal and minimal eigenvalues by  $\Lambda_{\max,N}$ and $\Lambda_{\min,N}$ 
Bercu, Bony and Bruneau give in \cite {B.B.B.1} an example of two functions
$f,g \in C^0(\mathbb T), g\ge 0$ such that $\displaystyle{\lim_{N\rightarrow +\infty} \Lambda_{\max,N}}$
exists but is greater than $ \sup_{\theta\in \mathbb T}(fg)(\theta)$ and another example  where 
$\displaystyle{\lim_{N\rightarrow +\infty} \Lambda_{\min,N}}$ is defined but is smaller than 
$ \inf_{\theta\in \mathbb T}(fg)(\theta)$. 
However if $f,g \in L^{\infty} (\mathbb T)$ since $\mathrm{essup} (f) \mathrm{essup} (g) -
T_{N}(f) T_{N}(g)$ is a nonnegative operator it is quite easy to obtain, from the results of 
\cite{B.G.R.1}, that 
$$\mathrm{essup} (f) \mathrm{essup} (g) = \mathrm{essup} (f g) \Rightarrow 
\lim_{N\rightarrow +\infty} \Lambda_{\max,N} =\mathrm{essup} (f g).$$
\end{remark}
 \section{Main result}
 In the rest of this paper we denote by $\chi$ the function $ \theta \rightarrow e^{i \theta}$.
 \subsection{Single Fisher-Hartwig singularities.}
 \begin{theorem} \label{PREMIER}
 Let $f_{1} = \vert 1-\chi \vert ^{-2\alpha_{1}} c_{1}$ and 
 $f_{2} = \vert 1-\chi \vert ^{-2\alpha_{2}} c_{2}$ with $0 <\alpha_{1},\alpha_{2}<\frac{1}{2}$ and 
  $c_{1},c_{2} \in L^\infty (\mathbb T)$ that are continuous and nonzero in 1. We have 
 $$ \Vert T_{N} (f_{1}) T_{N}( f_{2}) \Vert =  N^{2\alpha_{1}+2\alpha_{2}} C_{\alpha_{1}}C_{\alpha_{2}} c_{1}(1) c_{2}(1)\Vert K_{\alpha_{1},\alpha_{2}}\Vert + o(N^{2\alpha_{1}+2\alpha_{2}} ).$$
 with 
  $C_{\alpha_{1}}$, $C_{\alpha_{2}}$ and $K_{\alpha_1,\alpha_2}$ as in Theorem \ref{SOUSPREMIER}.

  \end{theorem}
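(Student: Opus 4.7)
The plan is to view the matrix product $T_N(f_1)T_N(f_2)$, after rescaling by $N^{-(2\alpha_1+2\alpha_2)}$, as an approximation to the integral operator $c_1(1)c_2(1)C_{\alpha_1}C_{\alpha_2} K_{\alpha_1,\alpha_2}$ on $L^2[0,1]$, and to invoke the Widom lemma cited as Lemma~\ref{WIDOM} to convert matrix norms into operator norms of kernel operators. Concretely, I would work with the canonical isometry $U_N:\mathbb{C}^{N+1}\to L^2[0,1]$ sending $e_k$ to $\sqrt{N+1}\,\mathbf{1}_{[k/(N+1),(k+1)/(N+1)]}$; under this embedding, a matrix $M=(M_{ij})$ conjugates to a step-function kernel operator, the kernel taking the value $(N+1)\,M_{ij}$ on the cell indexed by $(i,j)$.

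The first ingredient is a Fourier-coefficient asymptotic. A classical Zygmund-type estimate yields $\widehat{|1-\chi|^{-2\alpha}}(k) = C_{\alpha}\,|k|^{2\alpha-1}(1+o(1))$ as $|k|\to\infty$, and a short convolution argument (of the sort that the more general Lemma~\ref{FOURIER} will make precise) lifts this, using only the continuity and nonvanishing of $c_j$ at $1$, to
$$\hat{f}_j(k) = c_j(1)\, C_{\alpha_j}\, |k|^{2\alpha_j-1}\,(1 + o(1)), \qquad |k|\to +\infty.$$

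The second step is to recognise the matrix product as a Riemann sum. Expanding
$$\bigl(T_N(f_1)T_N(f_2)\bigr)_{i+1,j+1} = \sum_{k=0}^N \hat{f}_1(k-i)\,\hat{f}_2(j-k),$$
substituting the asymptotics, and setting $x=i/N$, $y=j/N$, $t=k/N$, the right-hand side equals
$$c_1(1) c_2(1)\,C_{\alpha_1}C_{\alpha_2}\, N^{2\alpha_1+2\alpha_2-1}\cdot\frac{1}{N}\sum_{k=0}^N |x-t|^{2\alpha_1-1}|y-t|^{2\alpha_2-1}\;+\;(\text{error}),$$
whose discrete part converges to $K_{\alpha_1,\alpha_2}(x,y)$. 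Multiplied by the factor $(N+1)$ produced by the embedding and divided by $N^{2\alpha_1+2\alpha_2}$, the resulting step-function kernel tends to the kernel of $c_1(1)c_2(1)C_{\alpha_1}C_{\alpha_2}K_{\alpha_1,\alpha_2}$, and Lemma~\ref{WIDOM} then yields the claimed norm asymptotic.

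The hard part will be the uniform control of the error. The integrand $(x,y,t)\mapsto |x-t|^{2\alpha_1-1}|y-t|^{2\alpha_2-1}$ is singular at $t=x$ and $t=y$, while the Fourier asymptotic for $\hat f_j$ is valid only for large $|k|$; I would therefore split the convolution sum into a bulk range, where both $|k-i|$ and $|j-k|$ are large (handled by the asymptotics), and a near-diagonal remainder, where one of these indices is bounded (controlled by the integrability of the singularity, since $2\alpha_j-1>-1$). The remainder must then be shown to contribute $o(N^{2\alpha_1+2\alpha_2})$ in the operator norm on $L^2[0,1]$; since $K_{\alpha_1,\alpha_2}$ is compact on $L^2[0,1]$, being a composition of two Riesz-type potentials on a bounded interval, kernel convergence supplemented by such bulk-versus-boundary estimates transfers to operator-norm convergence, which is what is needed to conclude.
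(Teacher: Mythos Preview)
Your overall architecture is exactly the paper's: reduce to the model case, apply Widom's Lemma~\ref{WIDOM} to replace the matrix by a step-kernel operator $K_N$, and show $K_N\to K_{\alpha_1,\alpha_2}$ in operator norm by a bulk/near-diagonal splitting.

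There is, however, one genuine gap. You assert that a ``short convolution argument'' upgrades the Zygmund asymptotic to
\[
\hat f_j(k)=c_j(1)\,C_{\alpha_j}\,|k|^{2\alpha_j-1}(1+o(1))
\]
using \emph{only} $c_j\in L^\infty(\mathbb T)$ and continuity at $1$. That is not what Lemma~\ref{FOURIER} gives: Lemma~\ref{FOURIER} needs $c\in A(\mathbb T,r)$, i.e.\ summability of $|u|^r|\hat c(u)|$, precisely to make the convolution argument work. Under mere continuity at $1$ there is no control on the tails of $\hat c_j$, and the pointwise asymptotic for $\hat f_j$ can fail. The paper does \emph{not} attempt to prove such an asymptotic; instead it reduces to $c_1=c_2=1$ at the level of operator norms: writing $c_j=c_j(1)+\tilde c_j$ with $\tilde c_j(1)=0$, it quotes from \cite{BoVi} that $\|T_N(|1-\chi|^{-2\alpha_j}\tilde c_j)\|=o(N^{2\alpha_j})$, whence the cross terms in $T_N(f_1)T_N(f_2)$ are $o(N^{2\alpha_1+2\alpha_2})$. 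You should do the same.

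A second point is only a matter of level of detail. Your final sentence (``compactness of $K_{\alpha_1,\alpha_2}$ plus kernel convergence gives operator-norm convergence'') is not an argument: compactness of the limit does not by itself upgrade strong or pointwise kernel convergence to norm convergence. The paper carries this out explicitly by choosing $\mu\in(\max(1-2\alpha_1,1-2\alpha_2,\tfrac12),1)$, splitting into $|x-y|>N^{\mu-1}$ and $|x-y|<N^{\mu-1}$, showing uniform convergence of the kernel on the first region, and controlling the second region via the pointwise sandwich of Lemma~\ref{DUO},
\[
|x-y|^{2\alpha_1+2\alpha_2-1}\le k_{\alpha_1,\alpha_2}(x,y)\le H_{\alpha_1\alpha_2}\,|x-y|^{2\alpha_1+2\alpha_2-1},
\]
together with the known bound $\|K_\alpha\|\le 1/\alpha$ for the Riesz kernel. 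Your ``bulk-versus-boundary'' split is the right idea, but it is Lemma~\ref{DUO} that makes the boundary piece tractable, and you should invoke it.
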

  Now we give a lemma which is useful to prove Theorem \ref{DEUX}.
  \begin{lemma} \label{DUO}
  There exits a constant 
  $H_{\alpha_{1}\alpha_{2}}$
  such that 
 for all $(x,y) \in [0,1]^2$, $x\not=y$
 $$ \vert y-x \vert ^{2\alpha_{1}+2\alpha_{2}-1} \le \int_{0}^1 \vert x-t\vert ^{2\alpha_{1}-1} \vert y-t \vert ^{2\alpha_{2}-1} dt \le H_{\alpha_{1}\alpha_{2}}
 \vert x-y \vert ^{2\alpha_{1}+2\alpha_{2}-1},$$
 with
 $$ H_{\alpha_{1}\alpha_{2}} = \mathbf B (2\alpha_{1},2\alpha_{2}) +
  \int_{0}^{+\infty} (v^{2\alpha_{1}-1} (1+v)^{2\alpha_{2}-1} +
  v^{2\alpha_{2}-1} (1+v)^{2\alpha_{1}-1}) dv$$
  that is also 
  $$  H_{\alpha_{1}\alpha_{2}} = \mathbf B (2\alpha_{1},2\alpha_{2}) + \mathbf B (2\alpha_{2},3-2\alpha_{1}
  -2\alpha_{2}) + \mathbf B (2\alpha_{1},3-2\alpha_{1}-2\alpha_{2}).$$
  \end{lemma}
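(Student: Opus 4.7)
The plan is to split the integral at the two singularities $t=x$ and $t=y$ and then rescale each of the three resulting pieces so that the dependence on $y-x$ factors out. Assume without loss of generality that $x<y$ and write
\[
I(x,y) := \int_0^1 |x-t|^{2\alpha_1-1}|y-t|^{2\alpha_2-1}\,dt = I_1+I_2+I_3,
\]
where $I_1, I_2, I_3$ are the integrals over $[0,x]$, $[x,y]$ and $[y,1]$ respectively. The middle piece carries the main contribution: the affine change of variables $t=x+u(y-x)$, $u\in[0,1]$, gives exactly
\[
I_2 = (y-x)^{2\alpha_1+2\alpha_2-1}\,\mathbf{B}(2\alpha_1,2\alpha_2).
\]

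For the lower bound I would use the elementary pointwise inequality $u^{a-1}(1-u)^{b-1}\ge 1$ on $(0,1)$ whenever $a,b\in(0,1)$, since both $(a-1)\ln u$ and $(b-1)\ln(1-u)$ are nonnegative. Integrating over $[0,1]$ yields $\mathbf{B}(2\alpha_1,2\alpha_2)\ge 1$, and since $I\ge I_2$ one immediately gets $I\ge (y-x)^{2\alpha_1+2\alpha_2-1}$.

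For the upper bound I would process $I_1$ and $I_3$ with the companion substitutions $t=x-u(y-x)$ on $[0,x]$ and $t=y+u(y-x)$ on $[y,1]$. A routine calculation produces
\[
I_1 = (y-x)^{2\alpha_1+2\alpha_2-1}\int_0^{x/(y-x)} u^{2\alpha_1-1}(1+u)^{2\alpha_2-1}\,du,
\]
and the analogous expression for $I_3$ with $\alpha_1$ and $\alpha_2$ swapped and upper limit $(1-y)/(y-x)$. Enlarging both finite ranges of integration to $[0,+\infty)$ and summing with $I_2$ recovers the first form of $H_{\alpha_1,\alpha_2}$; the second form, involving three Beta functions, then follows from the Euler identity $\int_0^\infty v^{p-1}(1+v)^{-p-q}\,dv=\mathbf{B}(p,q)$.

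The main obstacle is controlling the two improper integrals at $+\infty$: their integrands decay only like $v^{2\alpha_1+2\alpha_2-2}$, so the clean factorized upper bound $H_{\alpha_1,\alpha_2}\,|x-y|^{2\alpha_1+2\alpha_2-1}$ is meaningful precisely when $\alpha_1+\alpha_2<1/2$. Under that hypothesis everything reduces to bookkeeping with standard Beta integrals; in the complementary range one would have to retain the finite cut-offs $x/(y-x)$ and $(1-y)/(y-x)$ and the bound would take a different, non-homogeneous form.
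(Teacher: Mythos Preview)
Your argument is correct and follows essentially the same route as the paper: split at $t=x$ and $t=y$, rescale each piece by $u=(t-\cdot)/(y-x)$ to factor out $(y-x)^{2\alpha_1+2\alpha_2-1}$, enlarge the outer ranges to $[0,\infty)$ for the upper bound, and for the lower bound drop $I_1,I_3\ge 0$ and use $I_2=(y-x)^{2\alpha_1+2\alpha_2-1}\mathbf B(2\alpha_1,2\alpha_2)\ge (y-x)^{2\alpha_1+2\alpha_2-1}$. Your justification that $\mathbf B(2\alpha_1,2\alpha_2)\ge 1$ via the pointwise inequality $u^{a-1}(1-u)^{b-1}\ge 1$ for $a,b\in(0,1)$ is exactly the missing line the paper leaves implicit.

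Your closing remark is also well taken and goes beyond what the paper says: the tail integrals $\int_0^\infty v^{2\alpha_i-1}(1+v)^{2\alpha_j-1}\,dv$ converge only when $\alpha_1+\alpha_2<\tfrac12$, so the explicit constant $H_{\alpha_1\alpha_2}$ as written is finite only in that range (and indeed the Euler identity gives $\mathbf B(2\alpha_i,\,1-2\alpha_1-2\alpha_2)$, so the second argument in the stated Beta expressions appears to be a misprint). The paper uses the lemma without flagging this restriction; your observation that for $\alpha_1+\alpha_2\ge\tfrac12$ no homogeneous bound of this form can hold (since $I(x,y)$ stays bounded away from zero while $|x-y|^{2\alpha_1+2\alpha_2-1}\to 0$) is a genuine improvement in rigor.
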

 Then we have, as corollary of Theorem \ref{PREMIER} 
  \begin{theorem} \label{DEUX}
 With the hypotheses of Theorem \ref{PREMIER}, if  $\gamma_{\alpha_{1},\alpha_{2}}$ is such that   
$\Vert T_{N}(f_{1}) T_{N}(f_{2})\Vert  \sim N^{2\alpha_{1}+2\alpha_{2}} c_{1}(1) c_{2}(1) 
\gamma_{\alpha_{1}\alpha_{2}}$ 
we have the bounds 
  $$ \psi (\alpha_{1}+\alpha_{2}) \frac{C_{\alpha_{1}} C_{\alpha_{2}} }{C_{\alpha_{1}+\alpha_{2}}}
  \le \gamma_{\alpha_{1},\alpha_{2}}\le H_{\alpha_{1}\alpha_{2}}  \frac{C_{\alpha_{1}}C_{\alpha_{2}} }{C_{\alpha_{1}+\alpha_{2}}}\frac{1}{\alpha_{1}+\alpha_{2}},$$
  with $\psi (\alpha) = \frac{1}{2\alpha} 
\left( \frac{2} {4\alpha+1} + 2 \frac{ \Gamma^2 (2\alpha+1)}{\Gamma(4\alpha+2)}\right) ^{\frac{1}{2}}.$
    \end{theorem}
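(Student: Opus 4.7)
By Theorem~\ref{PREMIER} we have $\gamma_{\alpha_1,\alpha_2} = C_{\alpha_1} C_{\alpha_2} \|K_{\alpha_1,\alpha_2}\|$, so the theorem reduces to two-sided estimates on $\|K_{\alpha_1,\alpha_2}\|$. Setting $\alpha = \alpha_1+\alpha_2$, I would introduce the auxiliary operator $L_\alpha$ on $L^2(0,1)$ with kernel $(x,y) \mapsto |x-y|^{2\alpha-1}$ and reduce everything to estimates on $\|L_\alpha\|$. Since $K_{\alpha_1,\alpha_2}$ and $L_\alpha$ are both positive self-adjoint compact operators with nonnegative kernels, their norms are attained on nonnegative eigenfunctions by a Perron--Frobenius argument, so the pointwise kernel domination of Lemma~\ref{DUO} transfers directly to operator norm domination:
$$\|L_\alpha\| \leq \|K_{\alpha_1,\alpha_2}\| \leq H_{\alpha_1 \alpha_2}\, \|L_\alpha\|.$$

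For the lower bound on $\|L_\alpha\|$ I would test against the constant function $f \equiv 1 \in L^2(0,1)$. A direct computation gives $(L_\alpha f)(x) = (x^{2\alpha}+(1-x)^{2\alpha})/(2\alpha)$; expanding the square and integrating term by term using standard Beta-function identities yields
$$\|L_\alpha f\|_2^2 = \frac{1}{4\alpha^2}\left(\frac{2}{4\alpha+1} + 2 \mathbf B(2\alpha+1,2\alpha+1)\right) = \psi(\alpha)^2,$$
hence $\|L_\alpha\| \geq \psi(\alpha)$. For the upper bound on $\|L_\alpha\|$ I would apply the Schur test to the symmetric kernel $|x-y|^{2\alpha-1}$: the row-sum $\int_0^1 |x-y|^{2\alpha-1}\,dy = (x^{2\alpha}+(1-x)^{2\alpha})/(2\alpha)$ is concave on $[0,1]$ when $\alpha < 1/2$, so its supremum is easily located and produces an explicit upper bound on $\|L_\alpha\|$.

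Combining the two bounds on $\|L_\alpha\|$ with the sandwich $\|L_\alpha\|\le\|K_{\alpha_1,\alpha_2}\|\le H_{\alpha_1\alpha_2}\|L_\alpha\|$, multiplying through by $C_{\alpha_1} C_{\alpha_2}$, and invoking Theorem~\ref{PREMIER} then yields two-sided bounds on $\gamma_{\alpha_1,\alpha_2}$ of the desired shape. The main obstacle is the final, somewhat delicate, step of rewriting the constants so that both bounds display the common normalising factor $C_{\alpha_1} C_{\alpha_2}/C_{\alpha_1+\alpha_2}$; this is a bookkeeping exercise that uses the reflection formula for the $\Gamma$ function applied to $C_\alpha = \Gamma(1-2\alpha)\sin(\pi\alpha)/\pi$ together with the explicit values of $\mathbf B$ involved in $H_{\alpha_1\alpha_2}$ and $\psi(\alpha)$.
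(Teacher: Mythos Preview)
Your approach is essentially the paper's: reduce to the auxiliary operator with kernel $|x-y|^{2\alpha-1}$ via the pointwise bounds of Lemma~\ref{DUO}, test against the constant function for the lower bound $\psi(\alpha_1+\alpha_2)$, and use a known bound $\|K_{\alpha}\|\le 1/\alpha$ for the upper estimate (the paper quotes this from \cite{BoVi} in Proposition~\ref{encadrement} rather than rederiving it via the Schur test). One small inaccuracy: $K_{\alpha_1,\alpha_2}$ is \emph{not} self-adjoint when $\alpha_1\neq\alpha_2$ (its kernel satisfies $k_{\alpha_1,\alpha_2}(x,y)=k_{\alpha_2,\alpha_1}(y,x)$), so the Perron--Frobenius justification as stated is off; however the conclusion is unaffected, since for any integral operator with nonnegative kernel one has $|Kf|\le K|f|$ pointwise, whence the operator norm is already realised on nonnegative functions and pointwise kernel domination transfers to norm domination without self-adjointness. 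Note also that $\alpha=\alpha_1+\alpha_2$ may exceed $1/2$, in which case the row-sum is convex rather than concave; the supremum is then at the endpoints and the Schur bound is even simpler.
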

  
  If we consider now the two symbols $f_{1,\chi_{0}}= \vert \chi_{0}-\chi \vert ^{-2\alpha_{1}} c_{1}$ and 
 $f_{2,\chi_{0}} = \vert \chi_{0}-\chi \vert ^{-2\alpha_{2}} c_{2}$ with $\chi_{0}\in \mathbb T$ it is known
  (see \cite{RS04}) 
 that 
 $$ T_{N}(\vert \chi_{0}-\chi \vert ^{-2\alpha} c) = \Delta_{0}(\chi_{0}) T_{N}\left(  \vert 1-\chi \vert ^{-2\alpha} c_{\chi_{0}}\right)\Delta_{0}^{-1}(\chi_{0})$$
 where $c_{\chi_{0}} (\chi) = c (\chi_{0}\chi)$ and where $\Delta_{0}(\chi_{0})$ is the diagonal matrix defined by 
 $\left( \Delta_{0}(\chi_{0}) \right)_{i,j}=0$ if $i \not=j$ and $ \left (\Delta_{0}(\chi_{0})\right) _{i,i}=\chi_{0}^i$.
 Hence we have the following corollary of Theorems \ref{PREMIER} and \ref{DEUX}
 \begin{corollary}\label{CUN}
 With the previous notations and hypotheses we have 
 $$ \Vert T_{N}(f_{1,\chi_{0}}) T_{N}(f_{2,\chi_{0}}) \Vert 
  \sim  N^{-2\alpha_{1}-2\alpha_{2}} C_{\alpha_{1}}C_{\alpha_{2}} c_{1}(\chi_{0}) c_{2}(\chi_{0})\Vert K_{\alpha_{1},\alpha_{2}}\Vert$$

\end{corollary}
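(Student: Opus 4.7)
The plan is to reduce this corollary directly to Theorem \ref{PREMIER} using the conjugation identity recalled just above the corollary statement. First I would apply that identity to both factors, writing
$$T_N(f_{1,\chi_0}) = \Delta_0(\chi_0)\, T_N\!\bigl(|1-\chi|^{-2\alpha_1} c_{1,\chi_0}\bigr)\, \Delta_0^{-1}(\chi_0),$$
and similarly for $T_N(f_{2,\chi_0})$, with $c_{i,\chi_0}(\chi) = c_i(\chi_0\chi)$. The middle factor $\Delta_0^{-1}(\chi_0)\Delta_0(\chi_0)$ collapses to the identity, giving
$$T_N(f_{1,\chi_0})\, T_N(f_{2,\chi_0}) = \Delta_0(\chi_0)\, T_N\!\bigl(|1-\chi|^{-2\alpha_1} c_{1,\chi_0}\bigr)\, T_N\!\bigl(|1-\chi|^{-2\alpha_2} c_{2,\chi_0}\bigr)\, \Delta_0^{-1}(\chi_0).$$

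Next I would observe that since $|\chi_0|=1$, the diagonal entries $\chi_0^i$ all lie on the unit circle, so $\Delta_0(\chi_0)$ is a unitary matrix and $\Delta_0^{-1}(\chi_0)=\Delta_0(\chi_0)^*$. Consequently the spectral norm is invariant under this conjugation, and
$$\Vert T_N(f_{1,\chi_0})\, T_N(f_{2,\chi_0})\Vert = \Vert T_N(|1-\chi|^{-2\alpha_1} c_{1,\chi_0})\, T_N(|1-\chi|^{-2\alpha_2} c_{2,\chi_0})\Vert.$$

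I would then verify that the shifted symbols $c_{i,\chi_0}$ satisfy the hypotheses of Theorem \ref{PREMIER}: they remain in $L^\infty(\mathbb T)$, are continuous at $1$ since $c_i$ is continuous at $\chi_0$, and their value at $1$ is $c_{i,\chi_0}(1)=c_i(\chi_0)\neq 0$. Applying Theorem \ref{PREMIER} then yields exactly
$$\Vert T_N(f_{1,\chi_0})\, T_N(f_{2,\chi_0})\Vert \sim N^{2\alpha_1+2\alpha_2} C_{\alpha_1} C_{\alpha_2}\, c_1(\chi_0)\, c_2(\chi_0)\, \Vert K_{\alpha_1,\alpha_2}\Vert,$$
which is the claimed asymptotic (the exponent $-2\alpha_1-2\alpha_2$ printed in the statement is evidently a sign typo). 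There is no real obstacle here: the whole argument is a unitary-invariance-of-norm reduction, and the only thing worth being careful about is identifying $c_{i,\chi_0}(1)$ with $c_i(\chi_0)$ and making sure the conjugating diagonal is genuinely unitary for $\chi_0\in\mathbb T$.
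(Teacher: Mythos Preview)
Your proof is correct and follows exactly the approach of the paper: the paper states the conjugation identity $T_N(|\chi_0-\chi|^{-2\alpha}c)=\Delta_0(\chi_0)T_N(|1-\chi|^{-2\alpha}c_{\chi_0})\Delta_0^{-1}(\chi_0)$ just before the corollary and declares the result an immediate consequence of Theorem~\ref{PREMIER}. You have simply made explicit the steps the paper leaves implicit (the unitarity of $\Delta_0(\chi_0)$, the cancellation in the product, and the verification that $c_{i,\chi_0}(1)=c_i(\chi_0)$), and you correctly flag the sign typo in the exponent.
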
 

  \subsection{Several Fisher-Hartwig singularities}
  Let $r>0$, we denote by  $A(\mathbb T,r) $ the set $\{ g\in L^1(\mathbb T) \vert \sum_{u\in \mathbb Z} \vert u\vert ^r \vert \hat g (u) \vert <\infty\}$.  We first state the following lemma 
  \begin{lemma} \label{FOURIER}
  Put 
  $\sigma =\displaystyle{ \prod _{j=1}^R \vert \chi-\chi_{j}\vert^{-2\alpha_{j}}  c}$ where 
  $\forall j$, $\chi_{j}\in \mathbb T$  and 
  $\displaystyle{\alpha_{1} > \max_{2\le j \le R} (\alpha_{j})}$. If $c$ is a regular positive function with 
  $c\in A(\mathbb T,r)$ ($1\ge r>0$ if  $\frac{1}{2} >\alpha_{1}>0$ and $ r\ge 2$ if $0 >\alpha_{1}>
  -\frac{1}{2}$)
 we have 
  $$ \widehat {\sigma} (M) =\displaystyle{ C_{\alpha_{1}} c(\chi_1) \prod_{j=2}^R \vert 1-\chi_{j}\vert ^{-2\alpha_{j}} M^{2\alpha_{1}-1}}
  + o(M^{2\alpha_{1}-1})$$
  uniformly in $M$.
  \end{lemma}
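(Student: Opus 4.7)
{Plan}
My strategy is to isolate the dominant singularity at $\chi_1$ via a partition of unity and reduce to the single-singularity asymptotic, with the other factors contributing only their values at $\chi_1$.

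I would introduce a smooth partition of unity $1 = \phi + \psi$ on $\mathbb T$, where $\phi$ is supported in a small arc about $\chi_1$ that excludes $\chi_2, \ldots, \chi_R$, and $\psi = 1 - \phi$ vanishes near $\chi_1$. Decompose $\sigma = \sigma\phi + \sigma\psi$. In $\sigma\psi$ the strongest singularity is smoothed out, so only the singularities at $\chi_j$, $j\ge 2$, remain; a standard Fisher--Hartwig Fourier bound then gives $\widehat{\sigma\psi}(M) = O(M^{2\max_{j\ge 2}\alpha_j -1}) = o(M^{2\alpha_1-1})$. The first piece is $\sigma\phi = |\chi - \chi_1|^{-2\alpha_1}\,\tilde h$, where $\tilde h = \phi \cdot c \cdot \prod_{j\ge 2}|\chi-\chi_j|^{-2\alpha_j}$ is regular because $\phi$ vanishes near each $\chi_j$, $j\ge 2$. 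One checks $\tilde h \in A(\mathbb T, r)$ under the hypothesis on $c$, with $\tilde h(\chi_1) = c(\chi_1)\prod_{j\ge 2}|\chi_1-\chi_j|^{-2\alpha_j}$. After rotating so that $\chi_1 = 1$ (absorbing a phase $\chi_1^M$ into $\widehat\sigma(M)$), it therefore suffices to establish
$$\widehat{|\chi-1|^{-2\alpha}\, u}(M) = C_\alpha\, u(1)\, M^{2\alpha-1} + o(M^{2\alpha-1})$$
for $u \in A(\mathbb T, r)$ and $\alpha = \alpha_1$.

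This I would prove by convolution in Fourier:
$$\widehat{|\chi-1|^{-2\alpha}\, u}(M) = \sum_{k\in\mathbb Z}\widehat{|\chi-1|^{-2\alpha}}(M-k)\,\hat u(k).$$
Using the classical expansion $\widehat{|\chi-1|^{-2\alpha}}(n) = C_\alpha n^{2\alpha-1} + o(n^{2\alpha-1})$ together with a finite-difference bound $\widehat{|\chi-1|^{-2\alpha}}(M-k) - \widehat{|\chi-1|^{-2\alpha}}(M) = O(M^{2\alpha-1-r}|k|^r)$ valid for $|k|\le M/2$, summing against $\hat u(k)$ yields the leading term $C_\alpha u(1) M^{2\alpha-1}$ (since $\sum_k \hat u(k) = u(1)$) plus an error of order $M^{2\alpha-1-r}\sum_k |k|^r |\hat u(k)|$, which is $o(M^{2\alpha-1})$ by the $A(\mathbb T, r)$ hypothesis. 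The tail $|k|>M/2$ is negligible since $\sum_{|k|>M/2}|\hat u(k)|\to 0$.

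The main obstacle will be making the finite-difference estimate on the base Fourier coefficients $\widehat{|\chi-1|^{-2\alpha}}(n)$ precise and uniform, and then controlling the split between the near and tail regimes in the convolution. A second subtle point is the case $-1/2 < \alpha_1 < 0$, where $|\chi-1|^{-2\alpha_1}$ is already continuous: the lemma then requires $r\ge 2$, suggesting one needs a second-order Taylor expansion of the base coefficients to recover the stated asymptotic.
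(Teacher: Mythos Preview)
Your approach is correct and genuinely different from the paper's. The paper works entirely on the Fourier side and proceeds by induction on the number of singularities. The base case (one singularity $|1-\chi|^{-2\alpha}$ times $c\in A(\mathbb T,r)$) is handled by the same convolution you write, but with a cruder splitting at a threshold $S_0=N^\nu$ rather than a finite-difference bound; the tails are controlled directly by $\sum_{|v|>S_0}|\hat c(v)|=O(S_0^{-r})$. Each additional singularity $|\chi-\chi_0|^{-2\alpha_j}$ is then adjoined by expanding the convolution $\hat\sigma(M)=\sum_v \widehat{\sigma_{\alpha_1}}(M-v)\chi_0^{-v}\widehat{\sigma_{\alpha_2}}(v)$, splitting into five ranges of $v$, and using Abel summation to exploit the oscillation of $\chi_0^{-v}$ in the ranges away from $v\approx 0$ and $v\approx M$.

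Your localization via a partition of unity collapses the multi-singularity case to the single-singularity case in one step, which is cleaner and more robust (and makes the role of the dominance condition $\alpha_1>\max_{j\ge2}\alpha_j$ transparent). The price is that you import the bound $\widehat{\sigma\psi}(M)=O(M^{2\max_{j\ge2}\alpha_j-1})$ as a known Fisher--Hartwig fact, whereas the paper's Abel-summation argument is self-contained. One small point to tighten: for your finite-difference estimate on $\widehat{|1-\chi|^{-2\alpha}}(n)$ you need not just the leading asymptotic but that the full coefficient admits a differentiable expansion (e.g.\ via the explicit Gamma-quotient formula), so that the remainder also obeys the $O(M^{2\alpha-2}|k|)$ bound; the paper sidesteps this by using only the crude $o$-asymptotic and a threshold split.
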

  This lemma and the proof of Theorem \ref{PREMIER}
  allow us to obtain 
  \begin{theorem} \label{TROISIEME}
    Let $\tilde f_{1} = \displaystyle{\vert 1-\chi \vert ^{-2\alpha} \prod_{j=1}^p \vert \chi_{j}-\chi \vert ^{-2\alpha_{j}} c_{1}}$
    and 
  $\tilde f_{2} = \displaystyle{\vert 1-\chi \vert ^{-2\beta} \prod_{j=1}^q \vert 
  \tilde\chi_{j}-\chi \vert ^{-2\alpha_{j}} c_{2} }$
  with $0 <\alpha,\beta< \frac{1}{2}$, $\displaystyle{\alpha > \max_{1\le j \le p} (\alpha_{j})}$,
  $\displaystyle{\beta > \max_{1\le j \le q} (\beta_{j})}$, $\chi_{j}\not=1$, $\tilde \chi_{j}\not=1$ and $c_{1},c_{2}$ two regular functions with 
  $c_{1}\in A(\mathbb T,r_{1}), c_{2}\in A(\mathbb T,r_{2})$ for $1\ge r_{1},r_{2}>0$. Then 
  $$\Vert T_{N}(\tilde f_{1}) T_{N}( \tilde f_{2})\Vert \sim C N^{2\alpha+2\beta} \Vert K_{\alpha,\beta}\Vert $$
  with
  $$ C = c_{1}(1) c_{2}(1) C_{\alpha}C_{\beta} \prod_{j=1}^p \vert 1- \chi_{j}\vert ^{+2\alpha_{j}} 
  \prod_{j=1}^q \vert 1- \tilde \chi_{j}\vert ^{+2\beta_{j}}. $$
  \end{theorem}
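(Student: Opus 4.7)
The plan is to adapt the proof of Theorem \ref{PREMIER}, replacing the single-singularity Fourier asymptotic that drives it with the refined one from Lemma \ref{FOURIER}. One cannot directly invoke Theorem \ref{PREMIER} after grouping the secondary singular factors into a ``regular part'' $\tilde c_1:=\prod_{j=1}^{p}|\chi_j-\chi|^{-2\alpha_j}c_1$, because this $\tilde c_1$ is itself singular at the points $\chi_j$ and therefore fails the continuity hypothesis required in Theorem \ref{PREMIER}. However, the argument underlying Theorem \ref{PREMIER} feels the symbols $f_1,f_2$ only through the leading-order behaviour of their Fourier coefficients, and this is exactly what Lemma \ref{FOURIER} provides for the more general symbols $\tilde f_1,\tilde f_2$.

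\emph{Step 1: Fourier asymptotic.} First I would apply Lemma \ref{FOURIER} to $\tilde f_1$ with the dominant singularity at $\chi=1$ (admissible since $\alpha>\max_j\alpha_j$ and $c_1\in A(\mathbb T,r_1)$), obtaining
$$\widehat{\tilde f_1}(M)=C_\alpha\,c_1(1)\prod_{j=1}^{p}|1-\chi_j|^{-2\alpha_j}\,M^{2\alpha-1}+o(M^{2\alpha-1})$$
uniformly in $M$, together with the symmetric statement for $\widehat{\tilde f_2}$. The product of these two leading coefficients is precisely the constant $C$ appearing in the theorem (up to the sign convention fixed by $|\chi_j-1|=|1-\chi_j|$).

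\emph{Step 2: Rerun the argument of Theorem \ref{PREMIER}.} That proof writes the entry
$$(T_N(f_1)T_N(f_2))_{i+1,j+1}=\sum_{k=0}^{N}\widehat{f_1}(k-i)\,\widehat{f_2}(k-j),$$
rescales $i=\lfloor Nx\rfloor$, $j=\lfloor Ny\rfloor$, $k=\lfloor Nt\rfloor$, plugs in the leading Fourier asymptotic, and recognises the resulting Riemann sum as $N^{2\alpha_1+2\alpha_2-1}$ times the kernel of $K_{\alpha_1,\alpha_2}$ scaled by the product of the two leading constants; the Widom-type Lemma \ref{WIDOM} then upgrades the pointwise limit of the rescaled kernel to convergence of the operator norm. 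The same manipulation, now fed by the asymptotic of Step 1, yields convergence of $N^{-2\alpha-2\beta}T_N(\tilde f_1)T_N(\tilde f_2)$ to $C$ times the kernel of $K_{\alpha,\beta}$, and hence the announced estimate $\|T_N(\tilde f_1)T_N(\tilde f_2)\|\sim C\,N^{2\alpha+2\beta}\|K_{\alpha,\beta}\|$.

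\emph{Main obstacle.} The delicate point is controlling the cumulative effect of the $o(M^{2\alpha-1})$ remainder in Step 1: after convolving with the analogous error for $\tilde f_2$ and summing over $k$, one must still obtain $o(N^{2\alpha+2\beta})$ in the final norm. This requires a truncation argument isolating (i) the terms where $|k-i|$ or $|k-j|$ is bounded, where Lemma \ref{FOURIER} is uninformative, and (ii) the boundary indices $k$ near $0$ or $N$, and then a dominated-convergence step for the bulk. These are exactly the technical points already present in Theorem \ref{PREMIER}, and the uniform $o$-estimate built into Lemma \ref{FOURIER} is of the strength needed for the same truncation to go through without change.
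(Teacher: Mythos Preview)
Your proposal is correct and follows the paper's approach: invoke Lemma \ref{FOURIER} for the Fourier asymptotics of $\tilde f_1,\tilde f_2$, then rerun the machinery of Theorem \ref{PREMIER}. One minor point of comparison: your ``Main obstacle'' paragraph frames the remainder control as a truncation of the product sum $\sum_k\widehat{\tilde f_1}(k-i)\widehat{\tilde f_2}(k-j)$, whereas the paper (as in the proof of Theorem \ref{PREMIER}) handles it more simply at the factor level---writing $T_N(\tilde f_i)=\tilde T_{i,N}+\tilde D_{i,N}$ with $\tilde D_{i,N}$ Toeplitz of entry size $o(|k-l|^{2\alpha-1})$, invoking the B\"ottcher--Virtanen bound $\|\tilde D_{i,N}\|=o(N^{2\alpha})$, and then using submultiplicativity of the norm on the cross terms; no separate truncation of the convolution is needed.
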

  With the same hypotheses on $\alpha$ and $\beta$ we can now consider $T_{N}(\tilde f_{1,\chi_{0}}) T_{N}(\tilde f_{2,\chi_{0}})$ 
  with $\tilde f_{1,\chi_{0}} = \displaystyle{\vert \chi_{0}-\chi \vert ^{-2\alpha} \prod_{j=1}^p \vert \chi_{j}-\chi \vert ^{-2\alpha_{j}} c_{1}}$
    and 
  $\tilde f_{2,\chi_{0}} =\displaystyle{ \vert \chi_{0}-\chi \vert ^{-2\beta} \prod_{j=1}^q \vert\tilde \chi_{j}-\chi \vert ^{-2\alpha_{j}} c_{2} }$,
with 
  $\forall j \in \{1, \cdots,p\}$ $\chi_j\not=\chi_0$ and   
  $\forall h \in \{1, \cdots,q\}$ $\tilde\chi_h\not=\chi_0$ .
    We obtain the corollary 
 
    \begin{corollary}\label{CDEUX}
 With the previous notations and hypotheses we have $$ \Vert T_{N}(\tilde f_{1,\chi_{0}}) 
 T_{N}(\tilde f_{2,\chi_{0}}) \Vert
  \sim  N^{2\alpha+2\beta} C_{\chi_{0}}\Vert K_{\alpha_{1},\alpha_{2}}\Vert$$
  with 
  $$ C_{\chi_{0}} =C_{\alpha}C_{\beta} c_{1}(\chi_{0}) c_{2}(\chi_{0}) \prod_{j=1}^p 
  \vert \chi_{0}-  \chi_{j}\vert ^{-2\alpha_{j}} 
  \prod_{j=1}^q \vert \chi_{0}- \chi_{j}\vert ^{-2\beta_{j}}. $$
    \end{corollary}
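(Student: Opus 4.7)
The plan is to reduce Corollary \ref{CDEUX} to Theorem \ref{TROISIEME} by exactly the same unitary-conjugation trick that was used to pass from Theorem \ref{PREMIER} to Corollary \ref{CUN}. Let $\Delta_N = \Delta_0(\chi_0)$ be the diagonal matrix with $(\Delta_N)_{i,i} = \chi_0^i$. The identity recalled just before Corollary \ref{CUN} extends, by the very same computation of matrix entries, to any integrable symbol: for every $g \in L^1(\mathbb T)$,
$$T_N(g(\chi)) = \Delta_N \, T_N(g(\chi_0 \chi)) \, \Delta_N^{-1}.$$
Since $\Delta_N$ is unitary, this conjugation preserves the spectral norm of any matrix it conjugates, and in particular
$$\|T_N(\tilde f_{1,\chi_0}) T_N(\tilde f_{2,\chi_0})\| = \|T_N(g_1) T_N(g_2)\|,$$
where I set $g_k(\chi) := \tilde f_{k,\chi_0}(\chi_0 \chi)$ for $k = 1, 2$.

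Next I would compute $g_1$ and $g_2$ explicitly. Using $|\chi_0 - \chi_0 \chi| = |1-\chi|$ and $|\xi - \chi_0 \chi| = |\bar\chi_0 \xi - \chi|$ for any $\xi \in \mathbb T$, one obtains
$$g_1(\chi) = |1-\chi|^{-2\alpha} \prod_{j=1}^p |\bar\chi_0 \chi_j - \chi|^{-2\alpha_j} \, c_1(\chi_0 \chi),$$
$$g_2(\chi) = |1-\chi|^{-2\beta} \prod_{j=1}^q |\bar\chi_0 \tilde\chi_j - \chi|^{-2\beta_j} \, c_2(\chi_0 \chi).$$
The hypotheses $\chi_j \neq \chi_0$ and $\tilde\chi_j \neq \chi_0$ ensure that the shifted singular points $\bar\chi_0 \chi_j$ and $\bar\chi_0 \tilde\chi_j$ are all distinct from $1$. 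The regular factors $c_k(\chi_0 \cdot )$ stay in $A(\mathbb T, r_k)$ because rotation of the torus only multiplies each Fourier coefficient by a unimodular constant. Hence $g_1$ and $g_2$ satisfy the hypotheses of Theorem \ref{TROISIEME}.

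To conclude, I would apply Theorem \ref{TROISIEME} to the pair $(g_1, g_2)$. The constant it produces is
$$C_\alpha C_\beta \, c_1(\chi_0) c_2(\chi_0) \prod_{j=1}^p |1 - \bar\chi_0 \chi_j|^{-2\alpha_j} \prod_{j=1}^q |1 - \bar\chi_0 \tilde\chi_j|^{-2\beta_j},$$
and the identity $|1 - \bar\chi_0 \xi| = |\chi_0 - \xi|$, valid for any $\xi \in \mathbb T$, converts this into $C_{\chi_0}$. Combined with the norm equality above and the factor $N^{2\alpha + 2\beta} \|K_{\alpha,\beta}\|$ furnished by Theorem \ref{TROISIEME}, this yields the stated asymptotic.

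There is no serious obstacle in this argument: the whole content is the observation that a rigid rotation of $\mathbb T$ induces a unitary conjugation on the Toeplitz algebra, relocating the dominant singularity from $\chi_0$ back to $1$ while leaving operator norms of products of Toeplitz matrices invariant. The only care required is the straightforward bookkeeping of constants when Theorem \ref{TROISIEME} is applied with the relabeled singular data.
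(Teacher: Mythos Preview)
Your proposal is correct and follows essentially the same route as the paper: the paper also derives Corollary~\ref{CDEUX} directly from Theorem~\ref{TROISIEME} via the unitary conjugation $T_N(f)=\Delta_0(\chi_0)\,T_N\bigl(f(\chi_0\,\cdot)\bigr)\,\Delta_0^{-1}(\chi_0)$, after which the constants are read off exactly as you do. Your write-up is in fact slightly more explicit than the paper's, in that you spell out why the rotated regular factors remain in $A(\mathbb T,r_k)$ and why the relocated singular points avoid $1$.
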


 \section{Demonstration of Theorem \ref{PREMIER}}
Let us recall the following Widom's result ( see, for instance, \cite{Bow2}). 
 \begin{lemma} \label{WIDOM}
 Let $A_{N}=(a_{i,j})_{i,j=0}^{N-1}$ be an $N \times N$ matrix with complex entries. We denote by 
 $G_{N}$ the integral operator on $L^2 [0,1]$ with kernel 
 $$ g_{N}(x,y) = a_{[Nx], [Ny]}, \quad (x,y) \in (0,1)^2.$$
 Then the spectral norm of $A_{N}$ and the operator norm of $G_{N}$ are related by 
the equality $\Vert A_{N}\Vert = N \Vert G_{N} \Vert.$
 \end{lemma}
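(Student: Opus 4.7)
The plan is to establish a near-isometry between $\mathbb C^N$ (with the Euclidean norm) and an $N$-dimensional subspace $V_N \subset L^2[0,1]$ of step functions, and to observe that under this correspondence the matrix $A_N$ and the operator $G_N$ agree up to the scalar $1/N$. Everything then reduces to an elementary norm computation.

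First, I would introduce the map $\Phi : \mathbb C^N \to L^2[0,1]$ defined by $\Phi(u)(x) = u_{[Nx]}$, whose image $V_N$ consists of the functions constant on each interval $[k/N, (k+1)/N)$. A direct calculation gives
\[
\|\Phi(u)\|_{L^2}^2 = \sum_{k=0}^{N-1} |u_k|^2 \int_{k/N}^{(k+1)/N} dx = \frac{1}{N}\|u\|_2^2,
\]
so $\Phi$ is an isometry up to the factor $N^{-1/2}$.

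Second, I would compute $G_N \Phi(u)$ directly. For $x$ with $[Nx] = i$,
\[
(G_N \Phi(u))(x) = \int_0^1 a_{i,[Ny]}\, u_{[Ny]}\, dy = \frac{1}{N}\sum_{j=0}^{N-1} a_{i,j} u_j = \frac{1}{N}(A_N u)_i,
\]
so that $G_N \Phi(u) = \frac{1}{N}\Phi(A_N u)$. Combined with the preceding norm identity, this gives
\[
\frac{\|G_N \Phi(u)\|_{L^2}}{\|\Phi(u)\|_{L^2}} = \frac{1}{N}\frac{\|A_N u\|_2}{\|u\|_2},
\]
and taking the supremum over $u \neq 0$ yields $\|G_N|_{V_N}\| = \frac{1}{N}\|A_N\|$.

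Third, I would show that restricting $G_N$ to $V_N$ does not change its operator norm. Since the kernel $g_N(x,y) = a_{[Nx],[Ny]}$ is piecewise constant on the $N\times N$ grid, the image $G_N f$ is automatically in $V_N$, and its value depends on $f$ only through the averages $\int_{j/N}^{(j+1)/N} f$. Hence $G_N = G_N P$, where $P$ is the orthogonal projection onto $V_N$; then $\|G_N f\| = \|G_N P f\| \le \|G_N|_{V_N}\|\,\|Pf\| \le \|G_N|_{V_N}\|\,\|f\|$, giving $\|G_N\| = \|G_N|_{V_N}\| = \frac{1}{N}\|A_N\|$, i.e.\ $\|A_N\| = N\|G_N\|$. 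There is no real obstacle here beyond careful bookkeeping of the normalization $1/N$ between the discrete and continuous pictures; the projection argument is the only step requiring a moment of thought, and it is made trivial by the piecewise-constant structure of $g_N$.
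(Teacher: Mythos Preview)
Your proof is correct and cleanly written; the identification of $\mathbb{C}^N$ with the step-function space $V_N$, the computation $G_N\Phi(u)=\frac{1}{N}\Phi(A_Nu)$, and the projection argument $G_N=G_NP$ are all sound. Note, however, that the paper does not actually prove this lemma: it is quoted as a known result of Widom, with references to \cite{Bow2} and a proof in \cite{BoVi}, so there is no ``paper's own proof'' to compare against.
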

 Denote by $K_{N}$ and $K_{\alpha_{1},\alpha_{2}}$ the integral operators on $L^2(0,1)$ with the kernels, defined for $x\not=y$ by 
 $$ k_{N}(x,y) = N^{-2\alpha_{1}-2\alpha_{2}+1} \sum_{0\le u\le N, u \not=[Nx], u \not=[Ny]}
\Bigl \vert [Nx] -u\Bigr \vert ^{2\alpha_{1}-1}
\Bigl\vert [Ny] -u\Bigr \vert ^{2\alpha_{2}-1} $$
 and 
 $$ k_{\alpha_{1},\alpha_{2}} (x,y) = \int_{0}^1 \vert x-t \vert ^{2\alpha_{1}-1} 
 \vert y-t \vert ^{2\alpha_{2}-1} dt.
 $$
 To prove Theorem \ref{PREMIER} we first 
 assume that the following lemma is true.
 \begin{lemma} \label{UNO}
 The operator $K_{N}$ converges to $K_{\alpha_{1},\alpha_{2}}$ in the operator norm 
 on $L^2(0,1)$.
 \end{lemma}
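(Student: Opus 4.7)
My plan is to combine pointwise convergence of the kernels off the diagonal with a uniform weakly-singular majorant, then conclude by a cutoff argument.

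First I would rewrite $k_N$ after the change of variable $v = u/N$ as
$$ k_N(x,y) = \frac{1}{N} \sum_{\substack{v\in\{j/N\,:\,0\le j\le N\}\\ v\ne x_N,\, v\ne y_N}} |x_N - v|^{2\alpha_1 - 1}\,|y_N - v|^{2\alpha_2 - 1}, $$
where $x_N = [Nx]/N$ and $y_N = [Ny]/N$. For any fixed $x \ne y$ in $(0,1)$, the integrand $t\mapsto |x_N-t|^{2\alpha_1-1}|y_N-t|^{2\alpha_2-1}$ is piecewise monotone with integrable singularities only at $t = x_N, y_N$, which are excluded from the sum. Standard Riemann-sum theory for improper integrals then yields $k_N(x,y) \to k_{\alpha_1,\alpha_2}(x,y)$: the two sum terms nearest the excluded singularities contribute at most $O(N^{-2\alpha_1})$ and $O(N^{-2\alpha_2})$ respectively, and the remaining sum approximates $\int_0^1 |x-t|^{2\alpha_1-1}|y-t|^{2\alpha_2-1}\,dt$ by uniform continuity of the integrand on sets bounded away from $\{x,y\}$.

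Next I would establish a uniform majorant
$$ k_N(x,y) + k_{\alpha_1,\alpha_2}(x,y) \le C\,\phi(|x-y|), $$
with $\phi(r) = r^{2\alpha_1+2\alpha_2-1}+1$ integrable on $[0,1]$. For the integral operator this is precisely Lemma \ref{DUO}; for the sum one compares to the integral on each of the three monotone subintervals obtained by removing $x_N$ and $y_N$, absorbing the single-grid-step boundary correction (of order $N^{-2\alpha_1}|x_N-y_N|^{2\alpha_2-1} \lesssim |x_N-y_N|^{2\alpha_1+2\alpha_2-1}$, since $|x_N-y_N|\ge 1/N$) into the constant. The degenerate case $x_N = y_N$ is treated separately: there $|x-y| < 1/N$ and the sum equals $N^{1-2\alpha_1-2\alpha_2}\sum_{\ell\ne 0}|\ell|^{2\alpha_1+2\alpha_2-2}$, which is bounded by a constant multiple of $|x-y|^{2\alpha_1+2\alpha_2-1}$.

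Then, for each $\epsilon > 0$, split $K_N - K_{\alpha_1,\alpha_2} = A_N^\epsilon + B_N^\epsilon$ with $A_N^\epsilon$ supported on $\{|x-y|<\epsilon\}$. Schur's test applied with the majorant $2C\phi$ gives
$$ \|A_N^\epsilon\| \le 2C \sup_{x\in[0,1]} \int_{|x-y|<\epsilon}\phi(|x-y|)\,dy, $$
which tends to $0$ as $\epsilon\to 0$, uniformly in $N$, by integrability of $\phi$. On the complement $\{|x-y|\ge\epsilon\}$ the kernels $k_N$ are uniformly bounded and converge uniformly to $k_{\alpha_1,\alpha_2}$ (the integrand of $k_{\alpha_1,\alpha_2}$ has no singularity within the region of integration for $(x,y)$ in this strip, so the Riemann-sum error is $O(1/N)$ uniformly), whence $\|B_N^\epsilon\|\to 0$ as $N\to\infty$ for each fixed $\epsilon$. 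Letting $N\to\infty$ and then $\epsilon\to 0$ yields $\|K_N - K_{\alpha_1,\alpha_2}\|\to 0$.

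The hard part will be the uniform discrete majorant of the second paragraph: the boundary corrections in the sum-versus-integral comparison must be controlled with care when $x_N$ and $y_N$ are equal, adjacent, or when either sits at an endpoint $0$ or $1$. Once that discrete analogue of Lemma \ref{DUO} is secured, Schur's test and the cutoff finish the argument routinely.
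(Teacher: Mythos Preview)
Your overall architecture matches the paper's: cut the kernel into a near-diagonal piece and an off-diagonal piece, control the near-diagonal piece via the weakly singular majorant $|x-y|^{2\alpha_1+2\alpha_2-1}$ (Lemma~\ref{DUO} for $k_{\alpha_1,\alpha_2}$ and a discrete analogue for $k_N$), and show uniform convergence of the kernels off the diagonal. Your use of a fixed cutoff $\epsilon$ together with Schur's test, followed by $\epsilon\to 0$, is in fact slightly cleaner than the paper's $N$-dependent cutoff $N^{\mu-1}$, and your discrete majorant argument in the second paragraph is essentially the technical lemma the paper proves for the matrix $B_N$.

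There is, however, a real gap in your third paragraph. You justify uniform convergence on $\{|x-y|\ge\epsilon\}$ by asserting that ``the integrand of $k_{\alpha_1,\alpha_2}$ has no singularity within the region of integration'' and hence ``the Riemann-sum error is $O(1/N)$ uniformly''. This is false: for every $(x,y)$ the integrand $t\mapsto |x-t|^{2\alpha_1-1}|y-t|^{2\alpha_2-1}$ blows up at $t=x$ and $t=y$, both of which lie in $[0,1]$. The condition $|x-y|\ge\epsilon$ separates the two singularities from each other, not from the domain of integration. Consequently the Riemann-sum error is \emph{not} $O(1/N)$; the contribution of the grid points nearest $t=x_N$ alone is of order $N^{-2\alpha_1}$. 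Establishing this uniform convergence correctly is precisely where the paper spends most of its effort (Lemma~\ref{CINQ}, with its seven-term decomposition of the sum and careful case analysis near the singularities and near the endpoints $0,1$). A correct argument along your lines would still require an inner cutoff around $t=x$ and $t=y$: bound the integral and the sum on $\{|t-x|<\delta\}\cup\{|t-y|<\delta\}$ by $C_\epsilon\,\delta^{2\min(\alpha_1,\alpha_2)}$ uniformly, use genuine $O_\delta(1/N)$ Riemann-sum convergence on the complement, and let $\delta\to 0$ after $N\to\infty$. So the step you labelled as routine is in fact the hard one; the discrete majorant, which you flagged as the hard part, is comparatively straightforward once you split into monotone pieces as you do.

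A minor point: your diagonal bound ``$N^{1-2\alpha_1-2\alpha_2}\sum_{\ell\ne 0}|\ell|^{2\alpha_1+2\alpha_2-2}\lesssim |x-y|^{2\alpha_1+2\alpha_2-1}$'' fails when $\alpha_1+\alpha_2\ge \tfrac12$, since then the left side is $O(1)$ (or $O(\log N)$ at equality) while the right side tends to $0$. Your stated majorant $\phi(r)=r^{2\alpha_1+2\alpha_2-1}+1$ absorbs the case $\alpha_1+\alpha_2>\tfrac12$, but at $\alpha_1+\alpha_2=\tfrac12$ you need an extra $\log$ factor (or simply note that the diagonal strip $\{[Nx]=[Ny]\}$ contributes $O(N^{-1}\log N)$ to the Schur integral and handle it separately).
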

 Assume Lemma \ref{UNO} is true. Suppose $c_{1}=c_{2}=1$.
 Then put $T_{1,N}, T_{2,N}, D_{1,N},D_{2,N}$ the $(N+1) \times (N+1)$ matrices defined by if $k\not=l$
 $$ (T_{1,N})_{(k+1,l+1)}= C_{\alpha_1}   \vert k-l\vert ^{2\alpha_{1}-1}\quad
  (T_{2,N})_{(k+1,l+1)}=C_{\alpha_2}   \vert k-l\vert ^{2\alpha_{2}-1},$$
  and $ (T_{1,N})_{(k+1,k+1)}=0,  (T_{2,N})_{(k+1,k+1)}=0$.
  On the other hand 
 $$ (D_{1,N})_{(k+1,l+1)}= \left(T_{N}(f_{1})\right)_{(k+1,l+1)}-(T_{1,N})_{(k+1,l+1)},$$
$$ (D_{2,N})_{(k+1,l+1)}= \left(T_{N}(f_{2})\right)_{(k+1,l+1)}-(T_{2,N})_{(k+1,l+1)}.$$
 We can remark that $D_{1,N}$ and $D_{2,N}$ are Toeplitz matrices such that
 $(D_{1,N})_{(k+1,l+1)} =o( \vert k-l\vert ^{2\alpha_1-1})$ and 
 $(D_{2,N})_{(k+1,l+1)} =o( \vert k-l\vert ^{2\alpha_2-1})$
 (see\cite{BrDa}) and this implies (see \cite{BoVi}) 
 $$\Vert D_{1,N} \Vert = o(N^{2\alpha_1}) \quad \mathrm{and}
 \quad \Vert  D_{2,N} \Vert = o(N^{2\alpha_2}).$$
 
 Then we have the upper bound
 $$ \Vert T_{N}(f_{1}) T_{N}(f_{2}) - T_{1,N}T_{2,N}\Vert \le \Vert D_{1,N} T_{2,N} \Vert +
 \Vert  D_{2,N} T_{1,N} \Vert + \Vert D_{1,N} D_{2,N} \Vert $$
 and (see \cite{BoVi})
 $$ \Vert D_{1,N} T_{2,N} \Vert \le \Vert D_{1,N}\Vert \Vert T_{2,N}\Vert = o(N^{2\alpha_{1}})  O (N^{2\alpha_{2}}) = o(N^{2\alpha_{1}+2\alpha_{2}})$$
 $$ \Vert D_{2,N} T_{1,N} \Vert \le \Vert D_{2,N}\Vert \Vert T_{1,N}\Vert = o(N^{2\alpha_{2}}) 
 O (N^{2\alpha_{1}}) = o(N^{2\alpha_{1}+2\alpha_{2}}).$$
 $$ \Vert D_{1,N} D_{2,N} \Vert \le \Vert D_{1,N}\Vert \Vert D_{2,N}\Vert = o(N^{2\alpha_{1}}) 
 o (N^{2\alpha_{2}}) = o(N^{2\alpha_{1}+2\alpha_{2}}).$$
 Hence 
 $$ \Vert T_{N}(f_{1}) T_{N}(f_{2}) \Vert = \Vert T_{1,N}T_{2,N}\Vert + o(N^{2\alpha_{1}+2\alpha_{2}}).$$
 Lemma \ref{WIDOM} implies 
 $$\Bigl \Vert \frac{T_{1,N}T_{2,N}}{N} \Bigr\Vert = \Vert N^{2\alpha_{1}+2\alpha_{2}-1}  K_{N}\Vert$$
 and with Lemma \ref{UNO} we obtain 
 $\displaystyle{\lim_{N\rightarrow + \infty} \Vert K_{N}\Vert =\Vert K_{\alpha_{1}\alpha_{2}}\Vert }$ that ends the proof in the case where the regular function equals 1. Now assume that $c_{1},c_{2}$ are  any continuous positive functions in $ L^{\infty} (\mathbb T)$. Let $\tilde c_{1}$ and $\tilde c_{2}$ defined by 
 $\forall j \in \{1,2\} \quad  \tilde c_{j} (\theta) = c_{j}(\theta)$ if $\theta\not=1$ and $  \tilde c_{j} (1) =0$.
 If $\tilde f_{1} =\vert 1-\chi\vert ^{-2\alpha_{1}}\tilde c_{1}$ and 
 $\tilde f_{2} =\vert 1-\chi\vert ^{-2\alpha_{2}}\tilde c_{2}$ we have (see \cite {BoVi}) 
 $$\Vert T_N \tilde f_{1} \Vert = o(N^{2\alpha_{1}}) \quad \Vert \tilde T_N\tilde f_{2} \Vert = o(N^{2\alpha_{2}}).$$
 Hence $\Vert T_N \tilde f_{1}T_N \tilde f_{2} \Vert =o (N^{-2\alpha_{1}-2\alpha_{2}})$. 
 Since  $f_{1} = \vert 1-\chi\vert ^{-2\alpha_{1}} (\tilde c_{1}+c_{1}(1))$ and 
 $f_{2} = \vert 1-\chi\vert ^{-2\alpha_{2}} (\tilde c_{2}+c_{2}(1))$
  we have $$\Vert T_{N}(f_{1}) T_{N}(f_{2})- T_{N} \left( c_{1}(1) \vert 1-\chi\vert ^{-2\alpha_{1}}\right) 
T_{N}\left( c_{2}(1) \vert 1-\chi\vert ^{-2\alpha_{2}}\right)   \Vert =o(N^{2\alpha_{1}+2\alpha_{2}})$$
and we finally get, via the beginning of the proof 
$$ \Vert T_{N}(f_{1}) T_{N}(f_{2}) \Vert = N^{2\alpha_{1}+2\alpha_{2}} C_{\alpha_{1}}C_{\alpha_{2}} c_{1}(1) c_{2}(1)\Vert K_{\alpha_{1}\alpha_{2}}\Vert + o(N^{2\alpha_{1}+2\alpha_{2}} )$$
 which is the expected formula. We are therefore left with proving Lemma \ref{UNO}.
 \begin{proof}{of the lemma \ref{UNO}}

 Fix $\mu$, $0<\mu<1$ sufficiently close to $1$ such that 
 $\mu>\max (1-2\alpha_{1},1-2\alpha_{2}, \frac{1}{2})$.
 Put 
 $$ k^1_{N}(x,y) = 
\left\{
\begin{array}{cc}
 k_{N}(x,y) & \mathrm{if} \quad \vert x-y \vert >N^{\mu-1} ,  \\
  0& \mathrm{otherwise}  
  \end{array}
\right.
$$
$$
k^2_{N}(x,y) = 
\left\{
\begin{array}{cc}
 k_{N}(x,y) & \mathrm{if} \quad \vert x-y \vert < N^{\mu-1} ,  \\
  0& \mathrm{otherwise}  
  \end{array}
\right.
$$

$$ k_{\alpha_{1},\alpha_{2},N}^1(x,y) =
\left\{
\begin{array}{cc}
 k_{\alpha_{1},\alpha_{2}}(x,y) & \mathrm{if} \quad \vert x-y \vert >N^{\mu-1} ,  \\
  0& \mathrm{otherwise.}  
  \end{array}
\right.
$$
$$
k^2_{\alpha_{1},\alpha_{2},N}(x,y) = 
\left\{
\begin{array}{cc}
k_{\alpha_{1},\alpha_{2}}(x,y) & \mathrm{if} \quad \vert x-y \vert < N^{\mu-1} ,  \\
  0& \mathrm{otherwise}.  
  \end{array}
\right.
$$
If we denote by $K^1_{N}, K^1_{\alpha_{1},\alpha_{2},N}, K^2_{\alpha_{1},\alpha_{2},N}$ the integral operator on $L^2 (0,1)$ with the kernels $h_N^1$, $h_N^1$, $k_{N}^1 k_{\alpha_{1},\alpha_{2},N}^1, 
k_{\alpha_{1},\alpha_{2},N}^2 $ respectively. We have 
$$ \Vert K_{\alpha_{1},\alpha_{2}} - K_{N} \Vert \le   \Vert K^1_{\alpha_{1},\alpha_{2},N} - K^1_{N} \Vert
 + \Vert K^2_{\alpha_{1},\alpha_{2},N}\Vert + \Vert K^2_{N} \Vert.$$
 Hence we have to show that 
 $$ \lim_{N \rightarrow + \infty} \Vert K^1_{\alpha_{1},\alpha_{2},N} - K^1_{N} \Vert =  0, 
\lim _{N \rightarrow + \infty} \Vert K^2_{\alpha_{1},\alpha_{2},N}\Vert= 0, 
\lim _{N \rightarrow + \infty}  \Vert K^2_{N} \Vert =0.$$
First we prove the following lemma.

  \begin{lemma} \label{CINQ}
  When $N$ goes to the infinity 
  $\Vert K^1_{\alpha_{1},\alpha_{2},N} - K^1_{N} \Vert \rightarrow  0$
  \end{lemma}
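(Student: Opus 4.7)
The plan is to establish a quantitative pointwise comparison of the kernels on the region $\vert x-y\vert > N^{\mu-1}$, then conclude via Schur's test. The key observation is that $k_N(x,y)$ is visibly a deleted Riemann sum approximating $k_{\alpha_1,\alpha_2}(x,y)$: setting $m=[Nx]$, $n=[Ny]$, $s_m = m/N$, $s_n = n/N$, one has
\[
k_N(x,y) = \frac{1}{N}\sum_{\substack{0\le u \le N \\ u \ne m,n}} \left\vert s_m - \frac{u}{N}\right\vert^{2\alpha_1-1}\left\vert s_n - \frac{u}{N}\right\vert^{2\alpha_2-1},
\]
which is the $\frac{1}{N}$-width Riemann sum for $\int_0^1 \vert s_m-t\vert^{2\alpha_1-1}\vert s_n-t\vert^{2\alpha_2-1}dt$ with the two singular terms removed. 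Since $\mu > \frac{1}{2}$, the restriction $\vert x-y\vert > N^{\mu-1}$ forces $\vert s_m - s_n\vert \ge N^{\mu-1}/2$ for large $N$, so the two integrable singularities are well separated.

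\textbf{Targeted pointwise bound.} I would aim to prove $\vert k_N(x,y) - k_{\alpha_1,\alpha_2}(x,y)\vert \le \varepsilon_N\, \vert x-y\vert^{2\alpha_1+2\alpha_2-1}$ with $\varepsilon_N \to 0$ independent of $(x,y)$ in the admissible region, by decomposing the error into three pieces: (a) the standard Riemann sum error on the complement of small neighborhoods of $s_m$ and $s_n$, where the integrand is smooth; (b) the contribution of the two omitted singular terms at $u=m,n$; (c) the replacement error incurred by switching $(s_m, s_n)$ back to $(x,y)$ in $k_{\alpha_1,\alpha_2}$, controlled by the H\"older regularity of the kernel in its arguments. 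The technical hypotheses $\mu > \max(1-2\alpha_1,1-2\alpha_2,\frac{1}{2})$ are exactly what is needed to absorb each of these contributions into the prefactor $\varepsilon_N \vert x-y\vert^{2\alpha_1+2\alpha_2-1}$, the exponent being the natural one coming from Lemma \ref{DUO}.

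\textbf{Schur's test.} From such a pointwise estimate,
\[
\sup_x \int_0^1 \mathbf{1}_{\vert x-y\vert>N^{\mu-1}} \vert k_N(x,y) - k_{\alpha_1,\alpha_2}(x,y)\vert\, dy \le \varepsilon_N \int_0^1 s^{2\alpha_1+2\alpha_2-1}\,ds \le C\,\varepsilon_N,
\]
the integral being finite because $\alpha_1 + \alpha_2 > 0$. The symmetric supremum in $y$ (swap $\alpha_1 \leftrightarrow \alpha_2$) is treated identically, and Schur's test then delivers $\Vert K^1_{\alpha_1,\alpha_2,N} - K^1_N\Vert \le C\,\varepsilon_N \to 0$, which is the claim.

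\textbf{Main obstacle.} The delicate point is the Riemann sum analysis in step (a)--(b). Since the integrand $t \mapsto \vert s_m - t\vert^{2\alpha_1-1}\vert s_n - t\vert^{2\alpha_2-1}$ is unbounded near each of $t=s_m$, $t=s_n$, the smoothness-based $O(N^{-1})$ bound fails in any neighborhood of a singularity. The remedy is to split $[0,1]$ into a neighborhood of $s_m$ of radius $\vert s_m - s_n\vert/2$, the corresponding neighborhood of $s_n$, and the smooth complement. On the complement the classical Riemann estimate applies with the right derivative bound. Inside each singular neighborhood the second factor is essentially constant $\vert s_m - s_n\vert^{2\alpha_j-1}$, so the problem reduces to a one-variable comparison between a Riemann sum and an integral of a power function, which is standard. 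Reassembling the contributions so that each is a small multiple of $\vert x-y\vert^{2\alpha_1+2\alpha_2-1}$, uniformly in the region $\vert x-y\vert > N^{\mu-1}$, is where the choice of $\mu$ is crucial.
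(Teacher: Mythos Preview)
Your outline is sound and leads to the same conclusion, but the route differs from the paper's in two respects. First, the paper does not aim for a weighted pointwise estimate followed by Schur's test; it proves the stronger fact that $|k_N^1(x,y)-k^1_{\alpha_1,\alpha_2}(x,y)|\to 0$ \emph{uniformly} on $\{|x-y|>N^{\mu-1}\}$, from which norm convergence is immediate. Second, instead of your three zones at the natural scale $|x-y|/2$, the paper introduces auxiliary exponents $0<\mu_1,\mu_2,\mu_3,\mu_4<\mu$ and splits the difference $S_N(x,y)$ into seven pieces according to whether $u$ lies below $[Nx]-N^{\mu_1}$, between $[Nx]\pm N^{\mu_i}$, between the two singular indices, near $[Ny]$, or above $[Ny]+N^{\mu_4}$; each piece is then handled by the mean value theorem and elementary power bounds, and the boundary situations $[Nx]\le N^{\mu}$ or $[Ny]\ge N-N^{\mu}$ are treated separately. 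Your packaging via Schur is arguably cleaner: it trades the paper's uniform estimate for a weaker weighted one that still suffices, and the integration in Schur absorbs the endpoint cases that the paper isolates by hand. Both arguments ultimately rest on the same Riemann-sum analysis near and away from the two power singularities, and both consume the hypothesis $\mu>\max(1-2\alpha_1,1-2\alpha_2,\tfrac12)$ at exactly the places you indicate.
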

  \begin{proof}{}
 To prove that 
 $  \Vert K^1_{\alpha_{1},\alpha_{2},N} - K^1_{N} \Vert \rightarrow  0$ it suffices 
 to show that 
 $\vert k^1_{N}(x,y) -k^1_{\alpha_{1},\alpha_{2}}\vert$ converges uniformly to zero for 
 $\vert x -y \vert >N^{\mu-1}$. 
First  we may assume that $x<y$ and we consider the case $[Nx]>N^\mu$ and $ [Ny]<N-N^\mu.$
 Next we study the cases $[Nx]\le N^\mu$ and $[Ny]\ge N - N^{\mu}$. 

 For  $\vert x -y \vert >N^{\mu-1}$ we have to consider the difference
 \begin{align*}
 S_{N}(x,y) &=\frac{1}{N} \sum_{u=0, u \not=[Nx], u \not=[Ny]}^N
 \Bigl\vert \frac{[Nx]}{N} -\frac{u}{N}\Bigr \vert ^{2\alpha_{1}-1}\Bigl\vert \frac{[Ny]}{N} -\frac{u}{N}\Bigr\vert ^{2\alpha_{2}-1} \\
 &- \int_{0}^1 \vert x-t \vert ^{2\alpha_{1}-1} 
 \vert y-t \vert ^{2\alpha_{2}-1} dt.
 \end{align*}
Let $S_{i,N}(x,y)$, $1\le i\le 7$ be the following differences
 \begin{align*}
  S_{1,N} (x,y) = \frac{1}{N}\sum_{u=0}^{[Nx]-N^{\mu_{1}}} 
 \Bigl \vert \frac{[Nx]}{N} -\frac{u}{N}\Bigr\vert ^{2\alpha_{1}-1}&
\Bigl \vert \frac{[Ny]}{N} -\frac{u}{N}\Bigr\vert ^{2\alpha_{2}-1} \\
& - \int_{0}^{\frac{[Nx]}{N}-N^{\mu_{1}-1}} \vert x-t \vert ^{2\alpha_{1}-1} \vert y-t \vert ^{2\alpha_{2}-1} dt,
\end{align*}
 \begin{align*} S_{2,N} (x,y) = \frac{1}{N}
 \sum_{[Nx]-N^{\mu_{1}}+1}^{[Nx]-1}
  \Bigl\vert \frac{[Nx]}{N} -\frac{u}{N}\Bigr\vert ^{2\alpha_{1}-1}&
 \Bigl\vert \frac{[Ny]}{N} -\frac{u}{N}\Bigr\vert ^{2\alpha_{2}-1} \\
 &- \int _{\frac{[Nx]}{N}-N^{\mu_{1}-1}+\frac{1}{N}}^{\frac{[Nx]-1}{N}}
\vert x-t \vert ^{2\alpha_{1}-1} \vert y-t \vert ^{2\alpha_{2}-1}dt,
  \end{align*}
\begin{align*}
S_{3,N} (x,y) =\frac{1}{N} \sum_{[Nx]+1} ^{[Nx]+N^{\mu_{2}}}
 \Bigl\vert \frac{[Nx]}{N} -\frac{u}{N}\Bigr\vert ^{2\alpha_{1}-1}
 &\Bigl\vert \frac{[Ny]}{N} -\frac{u}{N}\Bigr\vert ^{2\alpha_{2}-1} \\
 &- \int_{\frac{[Nx]+1}{N}} ^{\frac{[Nx]}{N}+N^{\mu_{2}-1}} \vert x-t \vert ^{2\alpha_{1}-1} \vert y-t \vert ^{2\alpha_{2}-1} dt,
 \end{align*}
\begin{align*}
 S_{4,N} (x,y) =\frac{1}{N}\sum_{[Nx]+1+N^{\mu_{2}}} ^{[Ny]-N^{\mu_{3}}}
 \Bigl\vert \frac{[Nx]}{N} -\frac{u}{N}\Bigr\vert ^{2\alpha_{1}-1}
&\Bigl \vert \frac{[Ny]}{N} -\frac{u}{N}\Bigr\vert ^{2\alpha_{2}-1} \\
& - \int_{\frac{[Nx]+1}{N}+N^{\mu_{2}-1}} ^{\frac{[Ny]}{N}-N^{\mu_{3}-1}} \vert x-t \vert ^{2\alpha_{1}-1} 
 \vert y-t \vert ^{2\alpha_{2}-1}dt,
 \end{align*}
\begin{align*}
 S_{5,N} (x,y) =\frac{1}{N}\sum_{[Ny]-N^{\mu_{3}}+1} ^{[Ny]-1}
  \Bigl\vert \frac{[Nx]}{N} -\frac{u}{N}\Bigr\vert ^{2\alpha_{1}-1}
 &\Bigl \vert \frac{[Ny]}{N} -\frac{u}{N}\Bigr \vert ^{2\alpha_{2}-1} \\
& - \int_{\frac{[Ny]+1}{N}-N^{\mu_{3}-1}} ^{\frac{[Ny]-1}{N}}
  \vert x-t \vert ^{2\alpha_{1}-1} \vert y-t \vert ^{2\alpha_{2}-1} dt,
  \end{align*}
  \begin{align*}
   S_{6,N} (x,y) =\frac{1}{N}\sum_ {[Ny]+1} ^{[Ny]+N^{\mu_{4}}} 
\Bigl\vert \frac{[Nx]}{N} -\frac{u}{N}\Bigr\vert ^{2\alpha_{1}-1}\Bigr \vert
&\Bigl \vert \frac{[Ny]}{N} -\frac{u}{N}\Bigr\vert ^{2\alpha_{2}-1} \\
 &- \int _ {\frac{[Ny]+1}{N}} ^{\frac{[Ny]}{N}+N^{\mu_{4}-1}} 
 \vert x-t \vert ^{2\alpha_{1}-1} \vert y-t \vert ^{2\alpha_{2}-1} dt,
 \end{align*}
\begin{align*} S_{7,N} (x,y) =\frac{1}{N}\sum_ {[Ny]+N^\mu_{4}+1} ^N 
 \Bigl\vert  \frac{[Nx]}{N} - \frac{u}{N}\Bigr\vert ^{2\alpha_{1}-1}
& \Bigl\vert \frac{[Ny]}{N} -\frac{u}{N}\Bigr \vert ^{2\alpha_{2}-1} \\
 &- \int _ {\frac{[Ny]+1}{N}+N^{\mu_{4}-1}} ^1  \vert x-t \vert ^{2\alpha_{1}-1} \vert y-t \vert ^{2\alpha_{2}-1}dt
\end{align*}
 with $ 0<\mu_{1}<\mu$, $ 0<\mu_{2}<\mu $, $ 0<\mu_{3}<\mu $, $ 0<\mu_{4}<\mu.$
 We can remark that 
$$
 S_{1,N}(x,y) \sim \int_{0}^{\frac{[Nx]}{N}-N^{\mu_{1}-1}} 
\left( \frac{[Nx]}{N} -t \right ) ^{2\alpha_{1}-1}
\left( \frac{[Ny]}{N} -t \right) ^{2\alpha_{2}-1} 
-  \left(x-t \right) ^{2\alpha_{1}-1} \left( y-t \right) ^{2\alpha_{2}-1}dt.$$
We may study the two differences 
 $$ 
  S'_{1,N}(x,y) =  \int_{0}^{\frac{[Nx]}{N}-N^{\mu_{1}-1}} 
 \left( \left( \frac{[Nx]}{N} -t \right)^{2\alpha_{1}-1} -\left( x-t \right) ^{2\alpha_{1}-1} \right)
 \left(\frac{[Ny]}{N} -t \right) ^{2\alpha_{2}-1} dt$$ 
 and 
  $$ S^{\prime\prime}_{1,N}(x,y) =  \int_{0}^{\frac{[Nx]}{N}-N^{\mu_{1}-1}}
\left( x-t \right) ^{2\alpha_{1}-1} \left( \left( \frac{[Ny]}{N} -t \right)^{2\alpha_{2}-1}- \left(y-t \right) ^{2\alpha_{2}-1} \right) dt. $$
Since 
$\vert  \frac{[Nx] -x}{x-t} \vert \le N^{-\mu_{1}}$ we have 
$\left( \frac{[Nx]}{N} -t \right)^{2\alpha_{1}-1} -\left( x-t \right) ^{2\alpha_{1}-1} 
= O(N^{-\mu_{1}})$ and 
$$ \vert  S'_{1,N}(x,y) \vert \le O(N^{-\mu_{1}}) 
 \int_{0}^{\frac{[Nx]}{N}-N^{\mu_{1}-1}} 
 \left(\frac{[Ny]}{N} -t \right) ^{2\alpha_{2}-1} dt = O(N^{-\mu_{1}}) =o(1).$$ 
The same method provides 
$$ S^{\prime\prime}_{1,N}(x,y) \vert = O(N^{-\mu}) =o(1).$$ 
 As previously we have now  
 \begin{align*}
 S_{2,N}(x,y) \sim \int _{\frac{[Nx]}{N}-N^{\mu_{1}-1}+\frac{1}{N}}^{\frac{[Nx]-1}{N}}
 &\left( \left( \frac{[Nx]}{N} -t \right ) ^{2\alpha_{1}-1}
\left( \frac{[Ny]}{N} -t \right) ^{2\alpha_{2}-1} \right.\\
&\left.
-  \left(x-t \right) ^{2\alpha_{1}-1} \left( y-t \right) ^{2\alpha_{2}-1}\right)dt.
\end{align*}
Obviously we have to consider the differences 
$$ 
  S'_{2,N}(x,y) =   \int _{\frac{[Nx]}{N}-N^{\mu_{1}-1}+\frac{1}{N}}^{\frac{[Nx]-1}{N}}
   \left( \left( \frac{[Nx]}{N} -t \right)^{2\alpha_{1}-1} -\left( x-t \right) ^{2\alpha_{1}-1} \right)
 \left(\frac{[Ny]}{N} -t \right) ^{2\alpha_{2}-1} dt$$ 
 and 
  $$ S^{\prime\prime}_{2,N}(x,y) =   \int _{\frac{[Nx]}{N}-N^{\mu_{1}-1}+\frac{1}{N}}^{\frac{[Nx]-1}{N}}
  \left( x-t \right) ^{2\alpha_{1}-1} \left( \left( \frac{[Ny]}{N} -t \right)^{2\alpha_{2}-1}- \left(y-t \right) ^{2\alpha_{2}-1} \right) dt. $$
With the main value theorem we can write
$$ S'_{2,N}(x,y) = -(-2\alpha_1 +1)
\left( \frac{[Nx]} {N} -x \right)
\int _{\frac{[Nx]}{N}-N^{\mu_{1}-1}+\frac{1}{N}}
^{\frac{[Nx]-1}{N}} c_{x,N} ^{2\alpha_{1}-2} (t) 
 \left(\frac{[Ny]}{N} -t \right) ^{2\alpha_{2}-1} dt
$$
with 
$c_{x,N} (t) > N^{-1}$ and 
$$  \int _{\frac{[Nx]}{N}-N^{\mu_{1}-1}+\frac{1}{N}}
^{\frac{[Nx]-1}{N}}  \left(\frac{[Ny]}{N} -t \right) ^{2\alpha_{2}-1} dt =
 O(N^{\mu_1-\mu}).
$$
So 
$  S'_{2,N}(x,y) = O (N^{\mu_1-\mu 
)-2\alpha_1+1}).
$
We can remark that $-2\alpha_1+1-\mu <0 \iff -2\alpha_1+1<\mu$. Hence if $-2\alpha_1+1<\mu$ and $\mu_1$
sufficiently little we have 
$ S'_{2,N}(x,y) =o(1)$.
Likewise we have 
  $ S^{\prime\prime}_{2,N} (x,y) = 
  O\left( N^{-1+ (\mu-1) (2\alpha_2-2)}\right)$. Hence
 $ \mu >\frac{-2\alpha_{2}+1}{-2\alpha_{2}+2}\Rightarrow S^{\prime\prime}_{2,N} (x,y) = o(1)$, and since 
 $-2\alpha_{2}+1>\frac{-2\alpha_{2}+1}{-2\alpha_{2}+2}$ we have $S^{\prime\prime}_{2,N} (x,y) = o(1)$.
 
 We prove exactly as previously 
 
 $$ \mu> -2\alpha_{1}+1 \quad \mathrm{and} \quad
  \mu >-2\alpha_{2}+1 \Rightarrow S_{3,N}=o(1)$$
 $$ \mu_{2}>0 \quad \mathrm{and} \quad \mu_{3}>0 \Rightarrow S_{4,N}=o(1)$$
 Swapping $x$ and $y$ we obtain
\begin {itemize}
\item
$ \mu>-2\alpha_{2}+1$ and $\mu> \frac{-2\alpha_{1}+1}{ -2\alpha_{1}+2}$ then 
$S_{5,N}(x,y) =o(1).$
\item
$ \mu>-2\alpha_{2}+1$ and $\mu> \frac{-2\alpha_{1}+1}{ -2\alpha_{1}+2}$ then 
$S_{6,N}(x,y) =o(1).$
\item
$\mu>0$ and $\mu_{4}>0$ then 
$S_{7,N}(x,y) =o(1).$
\end{itemize}
To complete the proof we have still to bound the integrals
$$ \int_{\frac{[Nx]}{N} -N^{\mu_{1}-1}}^{{\frac{[Nx]}{N} -N^{\mu_{1}-1}}+\frac{1}{N}}
\vert x-t\vert^{2\alpha_{1}-1} \vert y-t\vert^{2\alpha_{2}-1} dt,\quad
 \int_{\frac{[Nx]-1}{N} }^{\frac{[Nx]+1}{N} }
\vert x-t\vert^{2\alpha_{1}-1} \vert y-t\vert^{2\alpha_{2}-1} dt$$
$$ \int_{\frac{[Nx]}{N} +N^{\mu_{2}-1}}^{{\frac{[Nx]}{N} +N^{\mu_{2}-1}}+\frac{1}{N}}
\vert x-t\vert^{2\alpha_{1}-1} \vert y-t\vert^{2\alpha_{2}-1} dt,\quad
 \int_{\frac{[Ny]}{N} -N^{\mu_{3}-1}}^{{\frac{[Ny]}{N} -N^{\mu_{3}-1}}+\frac{1}{N}}
\vert x-t\vert^{2\alpha_{1}-1} \vert y-t\vert^{2\alpha_{2}-1} dt$$
$$ \int_{\frac{[Ny]-1}{N} }^{\frac{[Ny]+1}{N} }
\vert x-t\vert^{2\alpha_{1}-1} \vert y-t\vert^{2\alpha_{2}-1} dt,\quad
\int_{\frac{[Ny]}{N} +N^{\mu_{4}-1}}^{{\frac{[Ny]}{N} +N^{\mu_{4}-1}}+\frac{1}{N}}
\vert x-t\vert^{2\alpha_{1}-1} \vert y-t\vert^{2\alpha_{2}-1} dt$$
which are obviously in $o(1)$with the hypotheses on $\mu$.\\
\textbf{ Assume now $1-N^{\mu-1}> y>N^{\mu-1}>x>0.$}
   For this case we have to consider the decomposition 
   $ \displaystyle{S_{N}(x,y) =\sum_{i=1}^6 S_{i,N} (x,y)} $
 with 
$$ S_{1,N}(x,y)=  \frac{1}{N}\sum_{u=0}^{[Nx]-1} 
 \Bigl \vert \frac{[Nx]}{N} -\frac{u}{N}\Bigr\vert ^{2\alpha_{1}-1}
\Bigl \vert \frac{[Ny]}{N} -\frac{u}{N}\Bigr\vert ^{2\alpha_{2}-1} 
 - \int_{0}^{\frac{[Nx]-1}{N}} \vert x-t \vert ^{2\alpha_{1}-1} \vert y-t \vert ^{2\alpha_{2}-1} dt,$$
 
 and $S_{i,N}$ defined as $S_{i+1,N}$ in the previous case.
We still consider the two differences 
$$ 
  S'_{1,N}(x,y) =  \int_{0}^{\frac{[Nx]-1}{N}}
 \left( \left( \frac{[Nx]}{N} -t \right)^{2\alpha_{1}-1} -\left( x-t \right) ^{2\alpha_{1}-1} \right)
 \left(\frac{[Ny]}{N} -t \right) ^{2\alpha_{2}-1} dt$$ 
 and 
  $$ S^{\prime\prime}_{1,N}(x,y) =  \int_{0}^{\frac{[Nx]-1}{N}}
\left( x-t \right) ^{2\alpha_{1}-1} \left( \left( \frac{[Ny]}{N} -t \right)^{2\alpha_{2}-1}- \left(y-t \right) ^{2\alpha_{2}-1} \right) dt. $$
We have 
$$  S'_{1,N}(x,y) \le N^{(\mu-1)(2\alpha_{2}-1)} O\left(( \frac{[Nx]}{N} )^{2\alpha_{1}}
- x^{2\alpha_{1}}\right) = N^{(\mu-1)(2\alpha_{2}-1)} O(N^{-2\alpha_{1}}).$$
We can remark that 
$ (\mu-1) (2\alpha_{2}-1) -2\alpha_{1}<0 \iff \mu > \frac{2\alpha_{1}}{2\alpha_{2}-1} +1$. Since
$1-2\alpha_{1}> \frac{2\alpha_{1}}{2\alpha_{2}-1} +1$ the hypotheses on $\mu$ give 
$S'_{1,N}(x,y) =o(1)$. Moreover $  \left( \left( \frac{[Ny]}{N} -t \right)^{2\alpha_{2}-1}- \left(y-t \right) ^{2\alpha_{2}-1} \right)=O(N^{-\mu})$ and $ S^{\prime\prime}_{1,N}(x,y) =O(N^{-\mu}) =o(1).$ 
The differences $S_{i,N}$ for $2\le i \le 6$ are as in the first case.\\
The case $N-N^\mu <[Ny] <N$ can be tackled identically.

    \end{proof}
   \textbf{Proof of $\Vert K_{\alpha_1,\alpha_2,N} ^2\Vert \rightarrow 0$} \\
   From the lemma \ref{DUO} we have, for $g \in L^2 (\mathbb T)$ et $y \in [0,1]$ 
   \begin{align*}
   K_{\alpha_1,\alpha_2,N}^2 (g) (x) &=& \int_{0}^1 k_{\alpha_{1},\alpha_{2}}(x,y) g(y) dy 
   = \int_{x-N^{\mu-1}}^{x+N^{\mu+1}} f_{\alpha_{1},\alpha_{2}} (x,y) g(y) dy \\
   &\le&  \int_{x-N^{\mu-1}}^{x+N^{\mu+1}} H_{\alpha_{1},\alpha_{2}} 
   \vert x-y\vert ^{2\alpha_{1}+2\alpha_{2}-1} g(y) dy =K^{\prime 2}_{\alpha_{1},\alpha_{2},N}(g) (y)
   \end{align*}
   where $K_{\alpha_1,\alpha_2,N}^{\prime 2}$ is the integral operator on $L^2 (0,1)$ with kernel 
 $$k_{\alpha_1,\alpha_2,N}^{\prime }(x,y) = 
   H_{\alpha_{1},\alpha_{2}} \vert x-y \vert ^{2\alpha_{1}2\alpha_{2}-1} $$ if 
   $\vert x-y \vert <N^{\mu-1}$ and $k^{\alpha_{1},\alpha_{2},\prime }_{N} (x,y)=0$ otherwise. \\
       If $\Vert g\Vert_{2}=1$we have 
   \begin{align*}
   \int _{0}^1 \vert K^2_{\alpha_1,\alpha_2,N} (g) (x) \vert ^2 dx =& 
   = \int_{0}^1  \Bigl \vert \int _{0}^1 k_{\alpha_1,\alpha_2,N}(x,y) g(y) dy \Bigr \vert ^2 dx\\
   &\le  \int_{0}^1  \Bigl (\int _{0}^1 k_{\alpha_1,\alpha_2,N}(x,y) \vert g(y) \vert dy \Bigr)^2 dx\\
   & \le   \int_{0}^1  \Bigl (\int _{0}^1 k_{\alpha_1,\alpha_2,N}^{\prime}(x,y) \vert g(y) \vert dy \Bigr)^2 dx
   \le \Vert K_{\alpha_1,\alpha_2,N}^{\prime 2}\Vert^2.
\end{align*}
Hence $\Vert K_{\alpha_1,\alpha_2,N}^2 \Vert 
   \le \Vert K_{\alpha_1,\alpha_2,N}^{\prime 2} \Vert = O\left( N ^{(\mu-1)(2\alpha_{1}2\alpha_{2})}\right)=o(1)$
(see \cite{BoVi}).

\noindent
 \textbf{Proof of $\Vert K_{N} ^2\Vert \rightarrow 0$} 
 \\
 As in {\cite{BoVi}} we define the integral operator ${\tilde {K}_{N} ^2}$ on $L^2(0,1)$ with the kernel 
 ${\tilde {k}_{N} ^2}$ defined by $k_{N} ^2$ in the staircase-like 
 bordered strip $\vert [Nx]-[Ny]\vert <N^\mu$ and be zero otherwise. On the squares where 
 ${\tilde {k}_{N} ^2} (x,y) -k_{N} ^2 (x,y) \not=0$ 
 we have $\vert [Nx] - [Ny] \vert \sim N^\mu$ and, 
 as for the proof of the lemma \ref{UNO}, for $(x,y)$ in this squares 
$$
\tilde {k}_{N} ^2 (x,y) -k_{N} ^2 (x,y)
\sim \int_{0}^1 \vert x-t \vert ^{2\alpha_{1}-1} \vert y-t \vert ^{2\alpha_{2}-1}  dt 
$$
and always with the lemma 
\ref{DUO}
$$
\vert \tilde {k}_{N} ^2(x,y) -k_{N} ^2 (x,y) \vert \le 
H_{\alpha_{1},\alpha_{2}} 
\vert x-y\vert ^{2\alpha_{1}+2\alpha_{2}-1} 
= O( N^{(\mu-1)(2\alpha_{1}+2\alpha_{2}-1)}).
$$
As the difference ${ \tilde {h}_{N} ^2} (x,y) -h_{N} ^2 (x,y)$ is 
supported in about $4 (N - N^\mu) =O(N)$ squares of side length $\frac{1}{N}$ we have the 
squared Hilbert-Schmidt norm
$$ \Vert {\tilde {K}_{N} ^2} - K_{N} ^2\Vert =
O\left(N \frac{1}{N^2}N^{(\mu-1) (4\alpha_{1}+4\alpha_{2}-2)}\right).$$
If $2\alpha_1 +2\alpha_2 -1>0$ we have
$(\mu -1) (4\alpha_{1}+4\alpha_{2}-2)-1<0$ and 
\begin{equation}\label{JOLI}
 \Vert {\tilde {K}_{N} ^2} - K_{N} ^2\Vert \rightarrow 0.
 \end{equation}
Otherwise since 
$\mu>\frac{1}{2}> \frac{-4\alpha_1-4\alpha_2+1}{-4\alpha_1-4\alpha_2+2}$ we have also (\ref{JOLI}). 

We are therefore with proving $\Vert {\tilde {K}_{N} ^2}\Vert \rightarrow 0$.
Let $B_N$ be the matrix such \\
$\left( B_N\right) _{k+1,l+1} =
C_{\alpha_1} C_{\alpha_2} 
\displaystyle{ \sum_{u=0 , u\not=k, u\not=l} 
\vert k-u \vert ^{2\alpha_1-1} 
\vert l-u \vert^{2\alpha_2 -1}}$
if $ \vert k-l\vert \le N^\mu$ and $\left( B_N\right) _{k+1,l+1} =0$
otherwise. We have to prove the following technical lemma 
\begin{lemma} 
$\exists \, M_{\alpha_{1},\alpha_{2}}>0$ such for $k\not=l$ 
$$ B_{k+1,l+1} \le M_{\alpha_{1},\alpha_{2}} \vert k-l \vert ^{2\alpha_1 + 2\alpha_2-1}$$
\end{lemma}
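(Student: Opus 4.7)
The plan is to split the sum defining $(B_N)_{k+1,l+1}$ according to the location of $u$ relative to $k$ and $l$, and to bound each resulting piece by a constant multiple of $|k-l|^{2\alpha_1+2\alpha_2-1}$. Assume without loss of generality $k<l$, write $d=l-k$, and decompose
\[ \sum_{u=0, u\neq k, l}^N |k-u|^{2\alpha_1-1}|l-u|^{2\alpha_2-1} = \Sigma_- + \Sigma_0 + \Sigma_+, \]
where $\Sigma_-$, $\Sigma_0$, $\Sigma_+$ correspond respectively to $u\in\{0,\dots,k-1\}$, $u\in\{k+1,\dots,l-1\}$, and $u\in\{l+1,\dots,N\}$.

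For the middle piece the substitution $j=u-k$ yields $\Sigma_0 = \sum_{j=1}^{d-1} j^{2\alpha_1-1}(d-j)^{2\alpha_2-1}$; factoring $d^{2\alpha_1+2\alpha_2-2}$ and writing $v=j/d$ turns this into a Riemann sum with step $1/d$ for the Beta-type integral $d\cdot\mathbf B(2\alpha_1,2\alpha_2)$, which furnishes $\Sigma_0 \le C_0\, d^{2\alpha_1+2\alpha_2-1}$. For the outer pieces I would substitute $j=k-u$ (resp.\ $j=u-l$), obtaining
\[ \Sigma_- = \sum_{j=1}^{k} j^{2\alpha_1-1}(j+d)^{2\alpha_2-1} \quad\text{and}\quad \Sigma_+ = \sum_{j=1}^{N-l} j^{2\alpha_2-1}(j+d)^{2\alpha_1-1}, \]
and rescale by $v=j/d$, so that each becomes (up to the prefactor $d^{2\alpha_1+2\alpha_2-1}$) a Riemann sum for $\int_0^{\cdot} v^{2\alpha_1-1}(1+v)^{2\alpha_2-1}dv$ or its mirror. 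These are precisely the two tail integrals entering $H_{\alpha_1,\alpha_2}$ in Lemma \ref{DUO}, so combining the three estimates produces
\[ B_{k+1,l+1} \le M_{\alpha_1,\alpha_2}\, |k-l|^{2\alpha_1+2\alpha_2-1} \]
with $M_{\alpha_1,\alpha_2}$ essentially $C_{\alpha_1}C_{\alpha_2}H_{\alpha_1,\alpha_2}$ plus a discretisation correction.

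The main, and essentially only, technical obstacle is that the integrands $v^{2\alpha_1-1}(1-v)^{2\alpha_2-1}$ and $v^{2\alpha_1-1}(1+v)^{2\alpha_2-1}$ are singular at $v=0$, so the naive ``sum $\le$ integral'' comparison breaks down at the endpoints. To remedy this I would isolate the term $j=1$ (and, in $\Sigma_0$, also $j=d-1$), bounding them directly via the elementary inequalities $(d+j)^{2\alpha_2-1}\le d^{2\alpha_2-1}$ for $1\le j\le d$ and $\sum_{j=1}^{d} j^{2\alpha_1-1} = O(d^{2\alpha_1})$, which together produce an $O(d^{2\alpha_1+2\alpha_2-1})$ contribution; the bulk range $j>d$ (or $1<j<d-1$ in $\Sigma_0$) is then handled by the standard monotone Riemann-sum comparison with the corresponding tail integral. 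This bookkeeping is elementary but is the point at which all constants have to be tracked in order to produce a single uniform $M_{\alpha_1,\alpha_2}$.
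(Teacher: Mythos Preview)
Your proposal is correct and follows essentially the same route as the paper: the same three-way split of the sum according to $u<k$, $k<u<l$, $u>l$, and the same reduction of each piece to $(l-k)^{2\alpha_1+2\alpha_2-1}$ times one of the three integrals that constitute $H_{\alpha_1,\alpha_2}$. The only difference is cosmetic: the paper invokes the Euler--MacLaurin formula to pass from sum to integral (and bounds the resulting boundary terms), whereas you do the discrete-to-continuous comparison by hand, isolating the near-singularity range $1\le j\le d$ and using monotone comparison on the tail; both devices achieve the same end.
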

\begin{proof}{}

Assume $l>k$ and write 
\begin{align*}
&\sum_{u=0, u\not=k, u\not=l} ^N \vert k-u\vert ^{2\alpha_1-1}
\vert l-u \vert ^{2\alpha_2-1} = 
\sum_{u=0, } ^{k-1} \vert k-u\vert ^{2\alpha_1-1}
\vert l-u \vert ^{2\alpha_2-1}+\\
&+\sum_{k+1} ^{l-1} \vert k-u\vert ^{2\alpha_1-1}
\vert l-u \vert ^{2\alpha_2-1}
+ \sum_{l+1} ^{N} \vert k-u\vert ^{2\alpha_1-1}
\vert l-u \vert ^{2\alpha_2-1}.
\end{align*}
The Euler and Mac-Laurin formula provides 
\begin{align*}
& \sum_{u=0, u\not=k} ^{k-1} \vert k-u\vert ^{2\alpha_1-1}
\vert l-u \vert ^{2\alpha_2-1} =\\ 
&=\int _0^{k-1} (k-u)^{2\alpha_1-1}
(l-u)^{2\alpha-1} du + \frac{1}{2} \left( (l-k+1)^{2\alpha_{2}-1}+ k^{2\alpha_{1}-1} l^{2\alpha_{2}-1}
\right) \left( 1+o(1) \right).
\end{align*}
Since $2\alpha_{1}-1<0$ and $2\alpha_{2}-1<0$ one can find easily $M_{1}>0$ such that 
$$\left( (l-k+1)^{2\alpha_{2}-1}+ k^{2\alpha_{1}-1} l^{2\alpha_{2}-1}
\right) <M_{1} (l-k)^{2\alpha_{1}+2\alpha_{2}-1}.$$
And we have also 
\begin{align*} 
\int _0^{k-1} (k-u)^{2\alpha_1-1} (l-u)^{2\alpha-2} du =& (l-k)^{2\alpha_{1} +2\alpha_{2}-1} \int _{1}
^{\frac{k}{l-k}} u^{2\alpha_{1}-1} (1+u)^{2\alpha_{2}-1} du\\
\le & (l-k)^{2\alpha_{1} +2\alpha_{2}-1} \int _{1}
^{+ \infty} u^{2\alpha_{1}-1} (1+u)^{2\alpha_{2}-1} du.
\end{align*}
Analogously one can show that 
\begin{align*}
&\sum_{u=k+1} ^{l-1} \vert u-k\vert ^{2\alpha_1-1}
\vert l-u \vert ^{2\alpha_2-1} =\\ 
&=\int _{k+1}^{l-1} (u-k)^{2\alpha_1-1}
(l-u)^{2\alpha-1} du + \frac{1}{2} \left( (l-k-1)^{2\alpha_{2}-2}+ (l-k-1)^{2\alpha_{1}-1} 
\right) \left( 1+o(1) \right)
\end{align*}
and 
\begin {eqnarray*}
\int _{k+1}^{l-1} (u-k)^{2\alpha_1-1} (l-u)^{2\alpha-1} du &=& 
(l-k)^{2\alpha_{2}-1} \int _{1}^{l-k-1} v^{2\alpha_{1}-1} \left( 1-\frac{v}{l-k}\right) ^{2\alpha_{2}-1} dv \\
&=& (l-k)^{2\alpha_{1}+2\alpha_{2}-1} \int _{\frac{1}{l-k}}^{1-\frac{1}{l-k} } w^{2\alpha_{1}-1} (1-w)^{2\alpha_{2}-1} dw\\
&\le &  (l-k)^{2\alpha_{1}+2\alpha_{2}-1} \int _{0}^{1} w^{2\alpha_{1}-1} (1-w)^{2\alpha_{2}-1} dw.
\end{eqnarray*}
The last sum provides 
\begin{align*}
 \sum_{u=l+1} ^{N} \vert u-k\vert ^{2\alpha_1-1}
\vert l-u \vert ^{2\alpha_2-1} &=\int _{l+1}^N (u-k)^{2\alpha_1-1}
(u-l)^{2\alpha-1} du \\
&+ \frac{1}{2} \left( (l-k+1)^{2\alpha_{1}-2}+ (N-k)^{2\alpha_{1}-1} 
(N-k)^{2\alpha_{2}-1}  \right) \left( 1+o(1) \right).
\end{align*}
We have $$ (N-k)^{2\alpha_{1}-1} (N-k)^{2\alpha_{2}-1} \le (l-k)^{2\alpha_{1}+2\alpha_{2}-2} 
\le  (l-k)^{2\alpha_{1}+2\alpha_{2}-1}
$$
and 
\begin{eqnarray*}
\int _{l+1}^N (u-k)^{2\alpha_1-1} (u-l)^{2\alpha-1} du &=& 
(l-k)^{2\alpha_{2}-1} \int _{l+1-k}^{N-k} v^{2\alpha_{1}-1} \left( \frac{v}{l-k} -1\right)^{2\alpha_{2}-1} dv\\
&=& (l-k)^{2\alpha_{1}+2\alpha_{2}-1} \int _{1+ \frac{1}{l-k}}^{\frac{N-k}{l-k}} w^{2\alpha_{1}-1} 
(w-1)^{2\alpha_{2}-1} dw\\
&\le & (l-k)^{2\alpha_{1}+2\alpha_{2}-1} \int _{1}^{+\infty} w^{2\alpha_{1}-1} 
(w-1)^{2\alpha_{2}-1} dw
\end{eqnarray*}
that ends the proof of the lemma. 
         \end{proof}
         Using lemma \ref {WIDOM} we can write 
         \begin{equation}\label{WIDOM2}
         \Vert \tilde {H}_{N} ^2 \Vert = \frac{1}{N} N^{2\alpha_{1}+2\alpha_{2}+1} 
         \Vert B_{N}\Vert.
         \end{equation}
         Consider now the matrix $C_{N}$ 
         defined by    
           $ \left(C_N\right)_{k+1,l+1}=0$ for $\vert k-l\vert \ge N^\mu$, $ \left(C_N\right)_{k+1,l+1}=
          M_{\alpha_{1}\alpha_{2}}\vert k-l\vert^{-2\alpha_1-2\alpha_2 -1}$ for
$ 0<\vert k-l\vert <N^\mu $, 
$ \left(C_N\right)_{k+1,k+1}= C_{\alpha_1} C_{\alpha_2} \sum_{u=0}^\infty 
 u^{-2\alpha_1-2\alpha_2 -2}$
 if $-2\alpha_{1}-2\alpha_{2}-1<0$,
 $ \left(C_N\right)_{k+1,k+1}=
 2 N^{-2\alpha_{1}-2\alpha_{2}-1} \int_{0}^1 \vert \frac{k}{N} -t \vert ^{-2\alpha_{1}-2\alpha_{2}-2} dt$ if 
 $-2\alpha_{1}-2\alpha_{2}-1>0.$
       If $x (x_{1},\cdots,x_{N+1})$ and $y (y_{1},\cdots, y_{N+1})$ are two vectors of $\mathbb R^{N+1}$ we have 
\begin{align*}
\Bigl \vert \Bigl \langle  B_{N} (x) \vert y \Bigr \rangle \Bigr \vert &= 
\Bigl \vert \sum_{i=1}^{N+1} \left( \sum_{j=1}^{N+1} \left(B_{N}\right) _{i,j} x_{j}\right) y_{i} \Bigr \vert \\
& \le \sum_{i=1}^{N+1} \left( \sum_{j=1}^{N+1} \left(B_{N}\right) _{i,j} \vert x_{j} \vert \right) \vert y_{i} \vert \\
& \le \sum_{i=1}^{N+1} \left( \sum_{j=1}^{N+1} \left(C_{N}\right) _{i,j} \vert x_{j} \vert \right) \vert y_{i} \vert 
\end{align*}
and  $$\Vert B_{N}\Vert \le\Vert  C_{N} \Vert. $$
But
$$\Vert  C_{N} \Vert\le O(\sum_{i=1}^{N^\mu} i^{-2\alpha_{1}-2\alpha_{2}-1}=O\left( N^{\mu (-2\alpha_{1}-2\alpha_{2})} \right).$$
  and from the equality (\ref{WIDOM2}) 
    $$  \Vert \tilde {K}_{\alpha_{1},\alpha_{2},N} ^2 \Vert = O\left( N ^{(2\alpha_{1}+2\alpha_{2}) (1-\mu)}\right)
  $$
 hence $\Vert \tilde {K}_{\alpha_{1},\alpha_{2},N} ^2 \Vert\rightarrow 0$ that achieves the proof of the lemma
  \ref{UNO}.
           \end{proof}
                    \section{ Demonstration of Lemma \ref{DUO}
           and Theorem\ref{DEUX}}
           \subsection{Proof of Lemma \ref{DUO}}
           Assume $y>x$. We have 
           \begin{eqnarray*}
           \int_{0}^x (x-t)^{2\alpha_{1}-1} (y-t)^{2\alpha_{2}-1} dt &=& \int_{0}^x u^{2\alpha_{1}-1} 
           (y-x+u)^{2\alpha_{2}-1} du \\
           &=& (y-x)^{2\alpha_{2}-1} \int_{0}^x  u^{2\alpha_{1}-1} (1+\frac{u}{y-x})^{2\alpha_{2}-1} du \\
           &=& (y-x)^{2\alpha_{1}+2\alpha_{2}-1} 
           \int_{0}^{\frac{x}{y-x}}  v^{2\alpha_{1}-1} (1+v)^{2\alpha_{2}-1} dv.
           \end{eqnarray*}
Consequently          
           $$   \int_{0}^x (x-t)^{2\alpha_{1}-1} (y-t)^{2\alpha_{2}-1} dt \le 
           (y-x)^{2\alpha_{1}+2\alpha_{2}-1} 
           \int_{0}^{\infty}  v^{2\alpha_{1}-1} (1+v)^{2\alpha_{2}-1} dv.$$
           We can also write 
            \begin{eqnarray*}
           \int_{x}^y (t-x)^{2\alpha_{1}-1} (y-t)^{2\alpha_{2}-1} dt &=& \int_{0}^{y-x} u^{2\alpha_{1}-1} 
           (y-x-u)^{2\alpha_{2}-1} du \\
           &=& (y-x)^{2\alpha_{2}-1} \int_{0}^{y-x}  u^{2\alpha_{1}-1} (1-\frac{u}{y-x})^{2\alpha_{2}-1} du \\
           &=& (y-x)^{2\alpha_{1}+2\alpha_{2}-1} 
           \int_{0}^{y-x}  v^{2\alpha_{1}-1} (1-v)^{2\alpha_{2}-1} dv
           \end{eqnarray*}
         and 
         $$ \int_{x}^y (t-x)^{2\alpha_{1}-1} (y-t)^{2\alpha_{2}-1} \le (y-x)^{2\alpha_{1}+2\alpha_{2}-1} 
           \int_{0}^{1 } v^{2\alpha_{1}-1} (1-v)^{2\alpha_{2}-1} dv.
         $$
           Finally we have 
           \begin{eqnarray*} 
           \int_{y}^1 (t-x)^{2\alpha_{1}-1} (t-y)^{2\alpha_{2}-1} dt &=& \int_{0}^{1-y} (u+y-x)^{2\alpha_{1}-1}
           u^{2\alpha_{2}-1} du \\
           &=&(y-x)^{2\alpha_{1}-1}  \int_{0}^{1-y} \left( \frac{u}{y-x} +1\right)^{2\alpha_{1}-1} 
           u ^{2\alpha_{2}-1}du \\
           &=& (y-x)^{2\alpha_{1}+2\alpha_{2}-1} \int_{0}^{\frac{1-y}{y-x}} (v+1)^{2\alpha_{1}-1} 
           v^{2\alpha_{2}-1} dv
           \end{eqnarray*}
           and 
           $$  \int_{y}^1 (t-x)^{2\alpha_{1}-1} (t-y)^{2\alpha_{2}-1} dt \le (y-x)^{2\alpha_{1}+2\alpha_{2}-1}           
           \int_{0}^{+ \infty} (v+1)^{2\alpha_{1}-1} 
           v^{2\alpha_{2}-1} dv.
           $$
           thus it implies that
           $$ \int_{0}^1\vert x-u\vert ^{2\alpha_{1}-1} \vert y-u \vert ^{2\alpha_{2}-1} du \le  
           H_{\alpha_{1}\alpha_{2}} \vert y-x \vert ^{2\alpha_{1}+2\alpha_{2}-1} $$
           with 
           $$ H_{\alpha_{1}\alpha_{2}} = \mathbf B(-2\alpha_{1},-2\alpha_{2}) +
           \mathbf B( 2\alpha_{1}, 3-2\alpha_{1}-2\alpha_{2})
           + \mathbf B( 2\alpha_{2}, 3-2\alpha_{1}-2\alpha_{2}) .$$
           To obtain the lower bound we write, 
          $$
           \int_{0}^x (x-u)^{2\alpha_{1}-1} (y-u)^{2\alpha_{2}-1} du \ge
           \int_{0}^x (y-u)^{2\alpha_{1}+2\alpha_{2}-2} du
           $$
           that is also
           $$   \int_{0}^x (x-u)^{2\alpha_{1}-1} (y-u)^{2\alpha_{2}-1} du \ge 
           \frac{ y^{2\alpha_{1}+2\alpha_{2}-1}- (y-x) ^{2\alpha_{1}+2\alpha_{2}-1}}
           {2\alpha_{1}+2\alpha_{2}-1}$$
           and  $$ \int_{0}^x (x-u)^{2\alpha_{1}-1} (y-u)^{2\alpha_{2}-1} du \ge 0.$$
           Likewise we have 
            $$ \int_{y}^1 (u-x)^{2\alpha_{1}-1} (y-u)^{2\alpha_{2}-1} du \ge 0.$$
                   Since we have also 
            $$ \int_{x}^y (u-x)^{2\alpha_{1}-1} (y-u)^{2\alpha_{2}-1} du \ge (y-x)^{2\alpha_{1}-2\alpha_{2}-1}$$
            we can conclude that 
         $$ \int_{0}^1\vert x-u\vert ^{2\alpha_{1}-1} \vert y-u \vert ^{2\alpha_{2}-1} du \ge  
          \vert y-x \vert ^{2\alpha_{1}+2\alpha_{2}-1}. $$
            \subsection{Proof of Theorem \ref{DEUX}}
           Taking into account that 
                      $$ \int_{0}^1 \vert x-t\vert ^{2\alpha_{1}-1} \vert y-t \vert ^{2\alpha_{2}-1} dt \le H_{\alpha_{1}\alpha_{2}} \vert x-y \vert ^{2\alpha_{1}+2\alpha_{2}-1}$$ we get  $\Vert K_{\alpha_{1},\alpha_{2}}\Vert \le \Vert K_{\alpha_{1}+\alpha_{2}}\Vert$
           where $K_{\alpha_{1}+\alpha_{2}}$ is the integral operator on $L^2(0,1)$ with kernel 
           $(x,y) \rightarrow \vert x-y\vert ^{2\alpha_{1}+2\alpha_{2}-1}$ (see the demonstration of 
           $\Vert K_{\alpha_{1},\alpha_{2},N}\Vert$ goes to zero in the proof of Lemma \ref{CINQ}).
       Using the following proposition (see \cite{BoVi})
       \begin{proposition} \label{encadrement}
  If $f = \vert \chi-\chi_{0}\vert ^{-2\alpha} c$ with $ c\in L^\infty (\mathbb T)$ continuous and nonzero 
  at $\chi_{0}\in \mathbb T$ and $\alpha\in ]0,\frac{1}{2}[$, if $K_\alpha$ 
  is the integral operator on $L^2(0,1)$  with kernel $(x,y)\rightarrow 
  \vert x-y\vert ^{2\alpha-1}$ then we have
  $$\Vert T_{N}(f)\Vert \sim N^{2\alpha} C_{\alpha} \Vert K_{\alpha}\Vert c(\chi_{0})$$
and 
$$ \psi(\alpha)
\le \Vert K_{\alpha}\Vert \le \frac{1}{\alpha}$$
    \end{proposition}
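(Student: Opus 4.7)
The proposition has two independent parts, and I would treat them in sequence.

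\textbf{Part 1: the asymptotic $\|T_N(f)\|\sim N^{2\alpha}C_\alpha\|K_\alpha\|c(\chi_0)$.} My plan is to imitate, in the single-matrix setting, the argument developed for Theorem \ref{PREMIER}. First I would reduce to $\chi_0=1$ by conjugating with the diagonal unitary $\Delta_0(\chi_0)$ (as already used before Corollary \ref{CUN}), and then to $c\equiv c(1)$ by writing $c=\tilde c+c(1)$ with $\tilde c(1)=0$ and invoking the fact (from \cite{BoVi}) that $\|T_N(|1-\chi|^{-2\alpha}\tilde c)\|=o(N^{2\alpha})$ whenever $\tilde c$ is continuous and vanishes at the singularity. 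Once $f=c(1)\,|1-\chi|^{-2\alpha}$, I would split $T_N(f)=c(1)(T_{\alpha,N}+D_N)$ where $T_{\alpha,N}$ has off-diagonal entries $C_\alpha|k-l|^{2\alpha-1}$, using the classical asymptotic $\widehat{|1-\chi|^{-2\alpha}}(k)\sim C_\alpha|k|^{2\alpha-1}$ of Fisher--Hartwig type to get $\|D_N\|=o(N^{2\alpha})$. Then Widom's Lemma \ref{WIDOM} gives $\|T_{\alpha,N}/N\|=N^{2\alpha-1}\|K_N'\|$ where $K_N'$ is the integral operator with kernel $C_\alpha N^{1-2\alpha}|[Nx]-[Ny]|^{2\alpha-1}$ on $L^2[0,1]$.

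\textbf{Part 1, convergence step.} I would establish $K_N'\to C_\alpha K_\alpha$ in operator norm by the single-variable analogue of Lemma \ref{UNO}: write $K_N'=K_N^{\prime,1}+K_N^{\prime,2}$ according to whether $|x-y|>N^{\mu-1}$ or not (with $\mu\in(\max(1-2\alpha,1/2),1)$), control the far-diagonal piece by uniform Riemann-sum convergence, and control the near-diagonal piece via the upper estimate $\int_0^1|x-t|^{2\alpha-1}|y-t|^{2\alpha-1}dt\lesssim|x-y|^{4\alpha-1}$ (or rather the simpler analogue $\int|x-y|^{2\alpha-1}dy$ here) plus a Hilbert--Schmidt computation on the thin diagonal band, exactly as in the final stretch of the proof of Lemma \ref{UNO}. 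This is the step I expect to be the main obstacle, since it requires the same delicate cutoff analysis; however it is strictly easier than the two-matrix situation already handled.

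\textbf{Part 2, upper bound $\|K_\alpha\|\le 1/\alpha$.} Since the kernel is positive and symmetric, Schur's test gives $\|K_\alpha\|\le\sup_x\int_0^1|x-y|^{2\alpha-1}dy$, and a direct computation yields
\[
\int_0^1|x-y|^{2\alpha-1}dy=\frac{x^{2\alpha}+(1-x)^{2\alpha}}{2\alpha}\le\frac{2}{2\alpha}=\frac{1}{\alpha},
\]
since $x,1-x\in[0,1]$ and $2\alpha\in(0,1)$.

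\textbf{Part 2, lower bound $\|K_\alpha\|\ge\psi(\alpha)$.} I would simply test against the constant function $g\equiv 1$, for which $\|g\|_2=1$ and $(K_\alpha g)(x)=\frac{x^{2\alpha}+(1-x)^{2\alpha}}{2\alpha}$. Expanding the square and using $\int_0^1 x^{4\alpha}dx=\int_0^1(1-x)^{4\alpha}dx=\frac{1}{4\alpha+1}$ together with $\int_0^1 x^{2\alpha}(1-x)^{2\alpha}dx=\mathbf B(2\alpha+1,2\alpha+1)=\frac{\Gamma(2\alpha+1)^2}{\Gamma(4\alpha+2)}$ gives
\[
\|K_\alpha g\|_2^2=\frac{1}{4\alpha^2}\left(\frac{2}{4\alpha+1}+2\,\frac{\Gamma(2\alpha+1)^2}{\Gamma(4\alpha+2)}\right)=\psi(\alpha)^2,
\]
so $\|K_\alpha\|\ge\|K_\alpha g\|_2=\psi(\alpha)$. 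The two bounds together finish Part 2; combined with Part 1 this completes the proposition.
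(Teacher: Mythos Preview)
Your proposal is correct, but note that the paper does not actually prove this proposition: it is quoted from \cite{BoVi} and used as a black box in the proof of Theorem~\ref{DEUX}. What you have written is essentially the single-matrix specialization of the paper's own argument for Theorem~\ref{PREMIER} (which in turn generalizes \cite{BoVi}), so in spirit your Part~1 reconstructs the cited result by the same machinery the paper later extends. Your Part~2 computations are exactly the ones the paper carries out in its proof of Theorem~\ref{DEUX} for the combined exponent $\alpha_1+\alpha_2$: the Schur bound $\sup_x\int_0^1|x-y|^{2\alpha-1}dy\le 1/\alpha$ and the test against $\mathbf 1$ yielding $\psi(\alpha)$ are precisely what appear there. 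So there is no discrepancy to report; you have simply supplied the proof that the paper outsources.
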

we obtain the upper bound  for $\Vert K_{\alpha_{1}+\alpha_{2}}\Vert$.\\
                      Let $\mathbf{1}$ be the function which is identically $1$ on $[0,1]$. We have, from Lemma \ref{DUO}
           $$ \Vert K_{\alpha_{1},\alpha_{2}} \Vert \ge 
           \frac{\Vert K_{\alpha_{1},\alpha_{2}} \mathbf{1} \Vert }{\Vert \mathbf{1}\Vert}
           = \Vert K_{\alpha_{1},\alpha_{2}} \mathbf{1} \Vert  \ge
            \Vert K_{\alpha_{1}+\alpha_{2}} \mathbf{1} \Vert.$$
            Since
            $ K_{\alpha_{1}+\alpha_{2}} \mathbf{1} (1)
            (x) = \frac{1}{2(\alpha_1+\alpha_2)} 
            \left( x^{2(\alpha_1+\alpha_2)} + (1-x)^{2
            (\alpha_1+\alpha_2)}\right)$, 
            we obtain that 
            $ \Vert K_{\alpha_{1},\alpha_{2}} \Vert$ is greater than or equal to
            $$ \frac{1}{4(\alpha_1+\alpha_2)}
            \int _0^1\left( x^{2(\alpha_1+\alpha_2)} 
            + (1-x)^{2 (\alpha_1+\alpha_2)}\right)^2 dx=
            \psi (\alpha_1,\alpha_2).$$
            This prove the lower bound for 
                   $\Vert K_{\alpha_{1},\alpha_{2}}\Vert$. 
      \section{ Demonstration of Theorem 
      \ref{TROISIEME}}
           \subsection{Demonstration of Lemma \ref{FOURIER}}
                    \subsubsection{First step: one singularity}
           Put $\sigma_{\alpha} = \vert 1-\chi\vert^{-2\alpha}  $ and $\sigma= \vert 1-\chi\vert^{-2\alpha} c$ with $c \in A(r, \mathbb T)$, where $r$ will be precise later.
           First we prove
           $\hat \sigma(M) = \vert M^{2\alpha-1} \vert c(1) \left(1+o(1)\right)$ uniformly in $M$.
            Of course we have for all $M \in \mathbb Z$ 
       \\ $ \displaystyle{\hat \sigma (M) = \sum_{u+v=M} \widehat {\sigma_{\alpha} } (u) \hat c (v)}.$
           Let $\epsilon>0$ and an integer $S_{0}>0$ such that 
           $$ \forall S \vert S \vert  \ge S_{0} \quad \sum_{\vert s \vert \le S_{0}} \hat c (s) = c(1) +R_{S}
         \quad   \mathrm{and} \quad \widehat {\sigma_{\alpha}} (S) = C_{\alpha} \vert S \vert ^{-2\alpha-1} 
         (1+r_{S})
           $$
           with $\vert R_{S}\vert \le \epsilon$ and $\vert r_{S}\vert \le \epsilon$. 
           We have 
           \begin{eqnarray*} 
           \hat \sigma (M) &=& \sum_{v< - S_{0}} \widehat{\sigma_{\alpha}} (M-v) \hat c (v) 
           +  \sum_{S_{0}\ge v\ge  - S_{0}} \widehat{\sigma_{\alpha} }(M-v) \hat c (v) \\
           &+&  \sum_{v> S_{0}} \widehat{\sigma_{\alpha}} (M-v) \hat c (v) 
           \end{eqnarray*}
           Obviously 
           $$ \Bigl \vert\sum_{v< - S_{0}} \sigma_{\alpha} (M-v) \hat c (v)\Bigr \vert  \le \max_{w\in\mathbb Z} 
           \vert\widehat   \sigma_{\alpha} (w)\vert \sum_{v< - S_{0}} \vert \hat c (v) \vert $$
           and if $c \in A(r,\mathbb T)$ and $S_{0}=N^\nu$ $0<\nu<1$ we can conclude 
           $$ \Bigl \vert\sum_{v< - S_{0}} \sigma_{\alpha} (M-v) \hat c (v)\Bigr \vert   =O(N^{-r \nu}).$$
           Now if $\nu$ is such that 
           $-r \nu <2\alpha-1$ we obtain 
            $$ \Bigl \vert\sum_{v< - S_{0}} \sigma_{\alpha} (M-v) \hat c (v)\Bigr \vert =o(N^{2\alpha-1}).$$
            To have $r\nu<2\alpha+1$ with $\nu \in ]0,1[$ and $\alpha\in ]0, \frac{1}{2}[$ we must choose
            $r$ in $]0,1]$. Moreover if  $\alpha\in ]-\frac{1}{2},0[$ we must pick $\alpha$ in $[2, + \infty[$.
           Clearly we have also 
           $$\Bigl \vert  \sum_{v> S_{0}} \widehat{\sigma_{\alpha}} (M-v) \hat c (v) \Bigr \vert
            = o(N^{2\alpha-1}).$$
           Moreover we have, if $\vert M \vert \ge 2S_{0}$, 
           $$  \sum_{S_{0}\ge v\ge  - S_{0}} \widehat{\sigma_{\alpha} }(M-v) \hat c (v) =
       C_\alpha    \vert M \vert ^{2\alpha-1} c(1)
       \left(1+ o(1)\right)$$ that is the announced result.
           \subsubsection{Second step: two singularities}
           With the same notations than previously we can consider the Fourier coefficients of the function
           $\sigma=\sigma_{\alpha_{2}}(\chi_{0}\chi)\sigma_{\alpha_{1}} c$ with $\alpha_{1}<\alpha_{2}$ and $\chi_0 \not=1$. 
           Following the first step we can assume $c=1$ without lost of generality. 
            For all $M \in \mathbb Z$ we have 
            $$\widehat{\sigma_{\alpha_{2}}(\chi_{0}\chi)}= \widehat{\vert 1-\chi_{0}\chi \vert ^{\alpha_{2}} }(M) = 
            \chi_{0}^{-M} \widehat{\sigma_{\alpha_2}}(M).$$
            Let $\epsilon >0$ and $S_0>0$ such that $S>S_0$ implies
           \begin{itemize}
           \item [$\bullet$]
           $$ \sum_{-S\le v\le S}\widehat 
           {\sigma_{\alpha_1}} (v) (\chi_0^{-v}) 
           =\sigma_{\alpha_1} (\chi_0^{-1}) (1+R_1)$$
           with $\vert R_1 \vert <\epsilon$.
           \item [$\bullet$]
                  $$ \sum_{-S\le v\le S} \widehat 
           {\sigma_{\alpha_2} }(v) (\chi_0^{-v}) 
           =\sigma_{\alpha_2} (\chi_0^{-1}) (1+R_2)$$
           with $\vert R_2 \vert <\epsilon$.
           \item [$\bullet$]
          For all $S$ such that $\vert S \vert >S_0$ we have 
          $$ \widehat {\sigma_1} (S)= C_{\alpha_1} 
          \vert S\vert ^{-2\alpha_1-1 } (1 +R_{1,S})$$
          with $ R_{1,S} =O(\epsilon)$.
           \item [$\bullet$]
            For all $S$ such that $\vert S \vert >S_0$ we have 
          $$ \widehat {\sigma_2} (S)= C_{\alpha_2} 
          \vert S\vert ^{-2\alpha_2-1 } (1 +R_{2,S})$$
          with $ R_{2,S} =O(\epsilon)$.
          \end{itemize}
         Since $$\hat \sigma (M) = 
         \sum_{v\in \mathbb Z} \widehat{ \sigma_{\alpha_{1}}}(M-v) \chi_{0}^{-v} \widehat{\sigma_{\alpha_{2}}}(v)$$
         and $$\hat \sigma (-M) = \sum_{v\in \mathbb Z} \widehat{ \sigma_{\alpha_{1}}}(M-v) \chi_{0}^{v} \widehat{\sigma_{\alpha_{2}}}(v)$$
we can assume, without loss of generality, that $M>0$.
  The aim of the rest of this demonstration is  to prove that for $M$ sufficiently large we have the formula
            $$\sigma (M) = C_{\alpha_1} \vert M \vert ^{2\alpha_1-1} c(1)
          \prod_{j=2} ^n \vert \chi_0 - \chi\vert ^{-2\alpha_j} (1+R_M)
          $$ with $\vert R_M \vert =O( \epsilon).$\\ 
           Let $\nu$ be a fixed real such $0<\nu<1$.
            We  write 
            $$
            \hat \sigma(M) =\sum_{i=0}^5 \Sigma_{i}(M).
      $$
             where           $$ \Sigma_{1}(M)= 
            \sum_{v\ge M+M^\nu} \widehat{ \sigma_{\alpha_{1}}}(M-v) \chi_{0}^{-v} \widehat{\sigma_{\alpha_{2}}}(v)
            \quad 
            \Sigma_{2}(M)= 
            \sum_{M-M^\nu<v< M+M^\nu} \widehat{ \sigma_{\alpha_{1}}}
            (M-v)  \chi_{0}^{-v}        
            \widehat{\sigma_{\alpha_{2}}}(v)$$
            $$ \Sigma_{3}(M)= 
            \sum_{M^\nu\le v\le M-M^\nu} \widehat{ \sigma_{\alpha_{1}}}    
            (M-v)\chi_{0}^{-v} 
            \widehat{\sigma_{\alpha_{2}}}(v)
            \quad
            \Sigma_{4}(M)= 
            \sum_{-M^\nu< v\le M^\nu} \widehat{ \sigma_{\alpha_{1}}} 
            (M-v)\chi_{0}^{-v} 
            \widehat{\sigma_{\alpha_{2}}}(v)
            $$
            $$\Sigma_{5}(M)= 
            \sum_{ v\le -M^\nu} \widehat{ \sigma_{\alpha_{1}}}(M-v)
             \chi_{0}^{-v} 
            \widehat{\sigma_{\alpha_{2}}}(v).
            $$
         
                Assume now $\vert M ^\nu\vert >S_0$.
                    We have
       $$
            \Sigma_{1} (M)= C_{\alpha_{1}} C_{\alpha_{2}} \sum_{v\ge M+M^\nu} (v-M)^{2\alpha_{1}-1}
              v^{2\alpha_{2}-1} \chi_{0}^{-v} \left(1+R_1(M)\right)$$
              with $R_1(M)=O(\epsilon)$.
              An Abel summation provides 
           \begin{eqnarray*}
           && \sum_{v\ge M+M^\nu} (v-M)^{2\alpha_{1}-1}
              v^{2\alpha_{2}-1} \chi_{0}^{-v} =
                          \sum_{v\ge M+M^\nu}
            \left ( (v-M)^{2\alpha_{1}-1} v ^{2\alpha_{2}-1} \right. \\
            &-&\left. (v+1-M)^{2\alpha_{1}-1} 
           (v+1) ^{2\alpha_{2}-1}\right) \tau_{v}+
            \left(M^\nu\right)^{2\alpha_{1}-1} 
           (M+M^\nu)^{2\alpha_{2}-1}
           \tau_{S_{0}(M)-1}
           \end{eqnarray*}
           with $\tau_{w}= \sum _{h=1}^{w} \chi_{0}^{-h}$.
           For each $v \ge M+M^\nu$ the main value theorem gives us a real $c_v$ $v<c_v<v+1$ such that
       \begin{eqnarray*}
       &&     \left ( (v-M)^{2\alpha_{1}-1} v ^{2\alpha_{2}-1} - (v+1-M)^{2\alpha_{1}-1} 
           (v+1) ^{2\alpha_{2}-1}\right) =\\
           &=& (-c_v-M) ^{2\alpha_{1}-2} c_v ^{2\alpha_{2}-2} \left( (c_v-M) (2\alpha_{2}+1)
           +c_v (2\alpha_{1} +1) \right) 
             \end{eqnarray*}
         from this equality we infer 
         $$\left ( (v-M)^{2\alpha_{1}-1} v ^{2\alpha_{2}-1} - (v+1-M)^{2\alpha_{1}-1} 
           (v+1) ^{2\alpha_{2}-1}\right) =O\left( (v-M)^{{2\alpha_{1}-2}} v^{2\alpha_{2}-2}\right)$$
         and  
             \begin{eqnarray*}
        & & \Bigl \vert\sum_{v\ge M+M^\nu}
           \left( \left ( (v-M)^{2\alpha_{1}-1} v ^{2\alpha_{2}-1} - (v+1-M)^{2\alpha_{1}-1} 
           (v+1) ^{2\alpha_{2}-1}\right) \tau_{v}\right)\Bigr \vert =\\
           &=& \sum_{v\ge M+M^\nu} 
           O\left((v-M)^{2\alpha_{1}-2} v^{2\alpha_{2}-2}\right)= O\left( (M+S_{0}(M))^{2\alpha_{2}-1}\right) 
           =o(M^{2\alpha_{1}-1})
             \end{eqnarray*}
        Since
           $$\Bigl \vert  \left(M^\nu\right)^{2\alpha_{1}-1} (M+M^\nu)^{2\alpha_{2}-1}
           \tau_{S_{0}(M)-1} \Bigr \vert = o(M^{2\alpha_{1}-1})$$
           we have
           $\Sigma_{1}(M)=o(M^{2\alpha_{1}-1}),$ and 
           $\Sigma_1 (M) =O(\epsilon M^{2\alpha_{1}-1})$ for a sufficiently large $M$.
        The bounds $M>M^\nu>S_0$ implies 
           $$ \Sigma_{2}(M)= M^{2\alpha_{2}-1} C_{\alpha_{2}} \vert 1-\chi_{0}\vert ^{2\alpha_{1}} \chi_{0}^{-M}
          \left (1+R_2(M) \right)$$
          with $ R_2(M)=O(\epsilon))$. Then
           $-\alpha_2 <-\alpha_1$ 
           provides  
            $ \Sigma_{2}(M)=o(M^{2\alpha_1-1})$.
            The hypothesis on $M$ gives us                      
                      $$\Sigma_{3}(M) =C_{\alpha_{1}}C_{\alpha_{2}} \sum_{v=M^\nu}^{M-M^\nu} 
           (M-v)^{2\alpha_{1}-1} v^{2\alpha_{2}-1} 
           \chi_0^{-v} \left(1+R_3(M)\right) $$
           with $R_3(M) =O(\epsilon)$.
            Always with an Abel summation we  obtain  
          $$ \sum_{v=M^\nu}^{M-M^\nu} 
            (M-v)^{2\alpha_{1}-1} v^{2\alpha_{2}-1} 
           \chi_0^{-v} =  A_{1}+A_{2}$$
           with 
        $$
         A_{1}=C_{\alpha_{1}}C_{\alpha_{2}} \sum_{v=S_{0}(M)}^{M-S_{0}(M)} 
           \left( (M-v)^{-2\alpha_{1}-1} v^{2\alpha_{2}-1} - (M-v-1)^{2\alpha_{1}-1} (v+1)^{2\alpha_{2}-1} \right)
           \tau_{v}
           $$
           and 
           $$
          A_{2}= \tau_{S_{0}(M)-1} (M-S_{0}(M))^{2\alpha_{1}-1} 
          \left(S_{0}(M)\right)^{2\alpha_{2}-1} 
           - \tau_{M-S_{0}(M)-1} (M-S_{0}(M)) ^{2\alpha_{2}-1} 
           \left(S_{0}(M)\right)^{2\alpha_{1}-1}.
         $$
          As previously  for each integer $v$ such that 
            $M^\nu\le v \le M-M^\nu$ we have
             a real $c_v$ $v<c_v<v+1$ such  
                that 
                 \begin{eqnarray*}
                && \left( (M-v)^{2\alpha_{1}-1} v^{2\alpha_{2}-1} - (M-v-1)^{2\alpha_{1}-1} (v+1)^{2\alpha_{2}-1} 
                \right)=\\
                &=&O( c_v^{2\alpha_{2}-2} (M-c_v)^{2\alpha_{1}-2} )\le 
               O\left(\left( v (M-v)\right)^{2\alpha_{1}-2} \right).
                  \end{eqnarray*}
                  The study of the function 
                  $x\rightarrow x(M-x)$ on 
                  $[M^\nu, M -M^\nu]$ gives 
               $$   \left( (M-v)^{2\alpha_{1}-1} v^{2\alpha_{2}-1} - (M-v-1)^{2\alpha_{1}-1} (v+1)^{2\alpha_{2}-1}  \right)\le O(M^{2\alpha_1-2})
               =o(M^{2\alpha_1-1}).$$
                          Moreover it is easily seen that
      $$ A_{2}
           = o(M^{-2\alpha_1-1}).$$
           Hence for sufficiently large $M$ we may write 
           $\Sigma_3 (M) =O(M^{2\alpha_{1}-1})$. 
          We obtain also
           $$\Sigma_4(M) = C_{\alpha_1} M^{2\alpha_1-1} 
          \vert 1-\chi_0\vert^{2\alpha_{2}} \left(1+o(1)\right)$$
          and as for $\Sigma_1$ we have  
          $\Sigma_5 (M)=o(M^{2\alpha_1-1}).$
          Finally we have obtained the asymptotic 
          expansion
                                        $$\forall M \quad 
                    \mathrm{such} \quad \mathrm{that} \quad 
                     M^{\nu} >S_0 \quad \hat \sigma (M) =  C_{\alpha_1} M^{2\alpha_1-1} 
          \vert 1-\chi_0\vert^{-2\alpha_{2}} \left(1+O(\epsilon) \right),$$
          that was the aim of our demonstration.
\subsubsection{$n$ and $n+1$ singularities.}
Let 
$ \sigma =\vert 1 -\chi \vert ^{-2\alpha_{1}} \vert \prod_{j=2} ^n \vert  \chi - \chi_{0}\vert ^{-2\alpha_j} c$ with 
$c\in A(r,\mathbb T)$ $0<r<1$ and $-\alpha_1 > -\alpha_j,
\quad \forall j, \quad 2\le j\le n.$ Assume that 
for 
$\epsilon>0$ and a sufficiently large 
$M$ we have
      $$\hat \sigma (M) = C_{\alpha_1} \vert M \vert ^{2\alpha_1-1} c(1)
          \prod_{j=2} ^n \vert \chi_0 - 1\vert ^{-2\alpha_j} (1+R_M)
          $$ with $\vert R_M \vert \le \epsilon.$ 
        If 
         $ \sigma' =\vert 1 -\chi \vert ^{-2\alpha_{1}}  \prod_{j=2} ^{n+1} \vert \chi - \chi_{0}\vert ^{-2\alpha_j} c$  
           $c\in A(r,\mathbb T)$ $0<r<1$ and $\alpha_1 > \alpha_j,
\quad \forall j, \quad 2\le j\le n+1,$
           we  prove exactly as for the precedent point that $\sigma'$
           has the same property that $\sigma$, that ends the proof of the present lemma.
           \subsection{Proof of Theorem \ref{TROISIEME}
            and Corollary \ref{CDEUX}  }
 The proof is the same than for the theorem \ref{PREMIER}.
   We can write $T_N (\tilde f_1) = \tilde T_{1,N} +\tilde D_{1,N}$
   and $T_N (\tilde f_2) = \tilde T_{2,N} +\tilde D_{2,N}$,
   with if $k\not=l$ 
   $$  \left(  \tilde T_{1,N} \right)_{k+1,l+1} = 
c_1(1) C_{\alpha}
   \vert l-k \vert ^{2\alpha-1} \prod _{j=1}^p \vert 1-\chi_j\vert 
   ^{-2\alpha_j} $$
   $$  \left(  \tilde T_{2,N} \right)_{k+1,l+1} = 
c_2(1) C_{\beta}
   \vert l-k \vert ^{2\beta-1} \prod _{j=1}^q \vert 1-\chi_j\vert 
   ^{-2\beta_j} $$
  and\\
   $  \left(\tilde T_{1,N} \right)_{k+1,k+1} =0,$
   $  \left(\tilde T_{2,N} \right)_{k+1,k+1} =0.$
   Then $\tilde D_{1,N}$ and $\tilde D_{2,N}$ are Toeplitz matrices 
   with $(\tilde D_{1,N})_{k+1,l+1}= o \vert k-l \vert ^{2\alpha-1}$
   and $(\tilde D_{2,N})_{k+1,l+1}= o \vert k-l \vert ^{2\beta-1}$.
   hence we have (see \cite {BoVi}) 
   $\Vert \tilde D_{1,N} \Vert = o(N^{2\alpha})$ and 
     $\Vert \tilde D_{2,N} \Vert = o(N^{2\beta}).$
      As for the demonstration of Theorem \ref{PREMIER}
      we have 
      \begin{eqnarray*} 
      \Vert T_N (\tilde f_1)T_N(\tilde f_2) \Vert &=& 
      \Vert \tilde T_{1,N} \tilde T_{2,N} \Vert 
      + o(N^{2\alpha 2 \beta})   \\
      &=& C N^{2\alpha 2 \beta} \Vert K_{\alpha,\beta} \Vert 
      + o(N^{2\alpha +2 \beta})
      \end{eqnarray*}
      with
      $$ C = c_1(1) c_2 (1) C_\alpha C_\beta 
      \prod _{j=1}^p \vert 1-\chi_j\vert ^{-2\alpha_j} 
      \prod _{j=1}^q \vert 1-\chi_j\vert ^{-2\beta_j}.
      $$
    Corollary \ref{CDEUX} is a direct consequence 
      of the equality 
       $$ T_{N}(\vert \chi_{0}-\chi \vert ^{-2\alpha} \psi_1) = \Delta_{0}(\chi_{0}) T_{N}\left(  \vert 1-\chi \vert ^{-2\alpha} \psi_{1,\chi_{0}}\right)\Delta_{0}^{-1}(\chi_{0})$$
      and 
      $$ T_{N}(\vert \chi_{0}-\chi \vert ^{-2\beta} \psi_2) = \Delta_{0}(\chi_{0}) T_{N}\left(  \vert 1-\chi \vert ^{-2\beta} \psi_{2,\chi_{0}}\right)\Delta_{0}^{-1}(\chi_{0})$$
  where $\Delta_{0}(\chi_{0})$ is as in the introduction and
  $$ \psi_1 =\prod_{j=1}^p \vert \chi_{j}-\chi \vert ^{-2\alpha_{j}} c_{1}$$
$$ \psi_2 =\prod_{j=1}^q \vert \chi_{j}-\chi \vert ^{-2\beta_{j}} c_{2}$$
and $$ \psi_{1,\chi_{0}} (\chi) = \psi_1 (\chi_{0}\chi)
\quad \mathrm{and}\quad 
\psi_{2,\chi_{0}} (\chi) = \psi_2 (\chi_{0}\chi).$$

  \bibliography{Toeplitzdeux}

\begin{thebibliography}{10}

\bibitem{Avram1}
F.~Avram.
\newblock On biliner forms in {G}aussian random variables and {T}oeplitz
  matrices.
\newblock {\em Prob. {T}heory {R}elated {F}ields.}, 79:37--45, 1988.

\bibitem{B.G.R.1}
B.~Bercu and A.~Rouault F.~Gamboa.
\newblock Large deviations for quadratic forms of stationnary {G}aussian
  processes.
\newblock {\em Stochastic process and their applications}, 71:75--90, 1997.

\bibitem{B.G.L.1}
B.~Bercu and M.Lavielle F.~Gamboa.
\newblock Sharp large deviations for gaussian quadratic forms with
  applications.
\newblock {\em ESAIM: {P}robability and {S}tatistics}, 4:1--24, 2000.

\bibitem{B.B.B.1}
B.~Bercu and V.~Bruneau J.F.~Bony.
\newblock Large deviations for {G}aussian stationnary processes and
  semi-classical analysis.
\newblock {\em S\'eminaire de {P}robabilit\'es 44, {L}ecture {N}otes in
  {M}aths.}, 2046:409--428, 2012.

\bibitem{BoGr}
A.~B\"ottcher and S.~Grudsky.
\newblock On the condition numbers of large semi-definite {T}oeplitz matrices.
\newblock {\em {A}cta {S}ci. {M}ath ({S}zeged)}, 69:889--900, 2003.

\bibitem{Bo.2}
A.~B\"ottcher and B.~Silbermann.
\newblock {\em Analysis of Toeplitz operators}.
\newblock Springer Verlag, 1990.

\bibitem{Bo.3}
A.~B\"ottcher and B.~Silbermann.
\newblock {\em Introduction to large {T}oepltitz truncated matrices}.
\newblock Springer Verlag, 1999.

\bibitem{BoVi}
A.~B\"ottcher and J.~Virtanen.
\newblock Norms of {T}eplitz matrices with {F}isher-{H}artwig symbols.
\newblock {\em SIAMS {M}atrix {A}nalysis {A}ppl.}, 29:660--671, 2007.

\bibitem{Bow2}
A.~B\"ottcher and H.~Widom.
\newblock On the eigenvalues of certain canonical higher-order ordinary
  differential operators.
\newblock {\em J. Math. Anal. Appl.}, 322:990--1000, 2006.

\bibitem{BrDa}
P.~J.~. Brockwell and R.~A. Davis.
\newblock {\em Times series: theory and methods}.
\newblock Springer Verlag, 1986.

\bibitem{DemZei}
A.~Dembo and O.~Zeitouni.
\newblock {\em Large deviations techniques and applications}.
\newblock Jones and {B}artlett publishers, 1993.

\bibitem{GS}
U.~Grenander and G.~Szeg{\"o}.
\newblock {\em {T}oeplitz forms and their applications}.
\newblock Chelsea, New York, 2nd ed. edition, 1984.

\bibitem{RS1111}
P.~Rambour and A.Seghier.
\newblock Inversion des matrices de {T}oeplitz dont le symbole admet un z\'ero
  d'ordre rationnel positif,valeur propre minimale.
\newblock {\em Annales de la Facult\'e des Sciences de Toulouse}, XXI, n°
  1:173--2011, 2012.

\bibitem{RS04}
P.~Rambour and A.~Seghier.
\newblock Formulas for the inverses of {T}oeplitz matrices with polynomially
  singular symbols.
\newblock {\em Integr. equ. oper. theory}, 50:83--114, 2004.

\bibitem{Sa.Ka.Ta}
T.~Sato and M.~Taniguchi Y.~Kakizawa.
\newblock Large deviation results for statistics of short and long-memory
  {G}aussian process.
\newblock {\em Australian and {N}ew {Z}eland {J}. Statist}, 40(1):17--29, 1998.

\bibitem{Wid}
H.~Widom.
\newblock On the eigenvalues of certain hermitian operators.
\newblock {\em Trans. Amer. Math. Soc.}, 88:491--522, 1958.

\bibitem{W3}
H.~Widom.
\newblock Extreme eigenvalues of translation kernels.
\newblock {\em Trans. amer. Math. Soc.}, 100:252--262, 1961.

\bibitem{W2}
H.~Widom.
\newblock Extreme eigenvalues of {N}-dimensional convolution operators.
\newblock {\em Trans. Amer. Math. Soc.}, 106:391--414, 1963.

\end{thebibliography}
\end{document}